\DeclareMathOperator*{\essinf}{ess\,inf}
\newtheorem{defi1}{Definition}[section]
\def\R{\mathbb{R}}
\DeclareMathOperator{\dist}{dist}
\newcommand{\eps}{\varepsilon}
\newtheorem{satz}[defi1]{Theorem}
\newtheorem{prop}[defi1]{Proposition}
\newtheorem{lemma}[defi1]{Lemma}
\newtheorem{cor}[defi1]{Corollary}
\newtheorem{rem}[defi1]{Remark}
\title{\large\textbf{On a fractional boundary version of Talenti's inequality in the unit ball}}
\author{\normalsize YASSIN EL KARROUCHI AND TOBIAS WETH}
\date{}
\begin{document}
\maketitle
\begin{abstract}
Inspired by recent work of Ferone and Volzone \cite{ferone2021symmetrization}, we derive sufficient conditions for the validity and non-validity of a boundary version of Talenti's comparison principle in the context of Dirichlet-Poisson problems for the fractional Laplacian $(-\Delta)^s$ in the unit ball $\Omega= B_1(0) \subset \R^N$. In particular, our results imply a universial failure of the classical pointwise Talenti inequality in the fractional radial context which sheds new light on the one-dimensional counterexamples given in \cite{ferone2021symmetrization}. 
  \end{abstract}
\section{Introduction}

Let $\Omega \subset \R^N$ be a smooth bounded domain, and consider the classical Dirichlet-Poisson problem 
\begin{equation}
\label{classical-Dirichlet-Poisson}  
\left\{\begin{aligned}
\ -\Delta u &= f && \qquad  \text{in $\Omega$}, \\ 
u&= 0 &&\qquad \text{on $\partial \Omega$.}  
\end{aligned}
\right.
\end{equation}
It is well known that for every $f \in L^\infty(\Omega)$, the problem (\ref{classical-Dirichlet-Poisson}) has a unique (weak) solution $u_{f} \in H^1_0(\Omega)$, and in fact we have $u_{f} \in C^{1,\alpha}(\overline \Omega)$ for some $\alpha>0$ by classical elliptic regularity theory. In the case where $f$ is nonnegative, one may compare (\ref{classical-Dirichlet-Poisson}) with the corresponding Schwarz symmetrized problem  
\begin{equation}
\label{classical-Dirichlet-Poisson-symmetrized}  
\left\{\begin{aligned}
\ -\Delta u &= f^* && \qquad  \text{in $\Omega^*$}, \\ 
u&= 0 &&\qquad \text{on $\partial \Omega^*$.}  
\end{aligned}
\right.
\end{equation}
Here $f^*$ denotes the Schwarz symmetrization of $f$, which is defined on the Schwarz-symmetrized domain $\Omega^*= B_r(0)$, where $r>0$ is chosen such that $|\Omega|= |B_r(0)|$. A special case of the classical {\em Talenti comparison principle}  then relates the unique solutions $u_{f}$ and $u_{f^*}$ of (\ref{classical-Dirichlet-Poisson}) and (\ref{classical-Dirichlet-Poisson-symmetrized}). More precisely, the main theorem in the seminal paper \cite{ASNSP_1976_4_3_4_697_0} yields the pointwise inequality
\begin{equation}
  \label{eq:talenti-claim-classical}
\bigl(u_{f}\bigr)^* \le u_{f^*}   \qquad \text{in $\Omega^*$.}
\end{equation}
In fact, as shown in \cite[Theorem 1]{ASNSP_1976_4_3_4_697_0}, this inequality holds for a rather general class of elliptic second order operatos in place of the Laplacian. It has many applications to optimization problems in PDE and the calculus of variations and has been extended to the context of more general elliptic and parabolic boundary value problems, see \cite{alvino-trombetti-lions-1990}.

Clearly, the inequality (\ref{eq:talenti-claim-classical}) has the following immediate consequence regarding the outer normal derivatives of the unique solutions $u_{f}$ and $u_{f^*}$ of (\ref{classical-Dirichlet-Poisson}) and (\ref{classical-Dirichlet-Poisson-symmetrized}): 
\begin{equation}
\label{talenti-classical-immediate-consequence}  
0 \le -\partial_\nu \bigl(u_{f}\bigr)^* \le -\partial_\nu u_{f^*} \qquad \text{on $\partial \Omega^*$}
\end{equation}
The main purpose of the present paper is to study the validity and failure of this inequality -- depending on $f$ -- in the context of fractional Dirichlet-Poisson problems in a ball $\Omega= \Omega^*$, where the (negative) normal outer derivatives are replaced by fractional normal derivatives.
Our work is strongly motivated by recent, highly interesting work \cite{ferone2021symmetrization} of Volzone and Ferone on a nonlocal version of Talenti's theorem. In particular, Volzone and Ferone \cite{ferone2021symmetrization} considered, for $s \in (0,1)$,  the fractional Dirichlet-Poisson problem
\begin{equation}
\label{fractional-Dirichlet-Poisson}  
\left\{
\begin{aligned}
(-\Delta)^su &= f &&\qquad \text{in $\Omega,$} \\
u& = 0 &&\qquad \text{on $\mathbb{R}^N\setminus\Omega$}
\end{aligned}
\right.
\end{equation}
and its symmetrized version
\begin{equation}
\label{fractional-Dirichlet-Poisson-symmetrized}  
\left\{
\begin{aligned}
(-\Delta)^su &= f^* &&\qquad \text{in $\Omega^*,$} \\
u& = 0 &&\qquad \text{on $\mathbb{R}^N\setminus\Omega^*$.}
\end{aligned}
\right.
\end{equation}
Here, again, we let $f \in L^\infty(\Omega)$ be a nonnegative function. Moreover, $(-\Delta)^su$ denotes the usual fractional Laplacian, which for sufficiently smooth and integrable functions $u$ is defined pointwisely as 
\begin{equation}
  \label{eq:pointwise-def-fractional-laplace}
(-\Delta)^su(x)=c_{N,s} \lim_{\eps \to 0^+} \int_{\mathbb{R}^N \setminus B_{\eps}(x)}\frac{u(x)-u(y)}{|x-y|^{N+2s}}dy, \qquad c_{N,s}=\frac{s4^s\Gamma(\frac{N}{2}+s)}{\pi^{\frac{N}{2}}\Gamma(1-s)}.
\end{equation}
Note also that $(-\Delta)^su \in L^2(\R^N)$ is well-defined for functions $u \in H^{s}(\mathbb{R}^N)$ via Fourier transform, i.e. 
\[
\widehat{(-\Delta)^su}(\xi)=|\xi|^{2s}\widehat{u}(\xi) \qquad \text{for $\xi \in \R^N$.}
\]
The natural energy space for weak solutions of (\ref{fractional-Dirichlet-Poisson}) is the space $\mathscr{H}^s_0(\Omega)$, defined as the set of functions $u \in H^s(\R^N)$ with $u \equiv 0$ on $\R^N \setminus \Omega$. More precisely, $u \in \mathscr{H}^s_0(\Omega)$ is, by definition, a weak solution of (\ref{fractional-Dirichlet-Poisson}) if
\begin{equation}
  \label{eq:defining-weak-sol}
\int_{\R^N}|\xi|^{2s}\widehat{u}(\xi)\widehat{w}(\xi)\,d\xi = \int_{\Omega}f w\,dx \qquad \text{for all $w \in \mathscr{H}^s_0(\Omega)$.}
\end{equation}
We note that (\ref{classical-Dirichlet-Poisson}) can be regarded as a limit problem for (\ref{fractional-Dirichlet-Poisson}) in the limit $s \to 1^-$. It is therefore appriopriate to let, whenever $s \in (0,1)$ is fixed and $f \in L^\infty(\Omega)$ is given, $u_{f} \in \mathscr{H}^s_0(\Omega)$ also denote the unique weak solution of (\ref{fractional-Dirichlet-Poisson}).
By fractional elliptic regularity theory up to the boundary (see \cite{ros2014dirichlet}), we then have $u_{f} \in C^s(\R^N)$. Moreover, setting $\delta(x):= \dist(x,\partial \Omega)$ for $x \in \Omega$, the function $x \mapsto \frac{u_{f}(x)}{\delta^s(x)}$ extends to a function in $C^\alpha(\overline \Omega)$ for some $\alpha>0$, and its restriction
  $$
  \frac{u_{f}}{\delta^s}: \partial \Omega \to \R
  $$
  to the boundary $\partial \Omega$ is usually called the {\em fractional normal derivative} of $u_{f}$. Note here that the limiting case $s=1$ leads again to the normal derivative. More precisely, for a function $u \in C^1(\overline \Omega)$ with $u \equiv 0$ on $\partial \Omega$ we have $\frac{u}{\delta} = -\partial_\nu u$ on $\partial \Omega$, where $\nu$ denotes the outer unit normal on $\partial \Omega$.

In \cite{ferone2021symmetrization}, Volzone and Ferone relate the unique solutions $u_{f}$ and $u_{f^*}$ of (\ref{fractional-Dirichlet-Poisson}) and (\ref{fractional-Dirichlet-Poisson-symmetrized}) by proving the mass concentration inequality 
\[
\int_{B_r(0)} (u_f)^* dx \leq \int_{B_r(0)} u_{f^*} dx \ \ \ \ \ \ \text{for every} \ \ r>0
\]
Note that this is a weaker conclusion than the pointwise inequality (\ref{eq:talenti-claim-classical}). In fact, Volzone and Ferone \cite{ferone2021symmetrization} also provide one-dimensional counterexamples which show that (\ref{eq:talenti-claim-classical}) fails in general. More precisely, in the one-dimensional case where $\Omega = \Omega^* = (-1,1) \subset \R$, the functions $u_f$ and $u_{f^*}$ were computed for two explicit even functions $f \in L^\infty(\Omega)$, and the failure of (\ref{eq:talenti-claim-classical}) was shown in this explicit examples. Our first main result of this paper shows that this failure is in fact -- in any space dimension -- a {\em universal effect} in the radial fractional case. Before stating this result, we wish to add an observation regarding (\ref{talenti-classical-immediate-consequence}) in the case $s=1$ if $\Omega= \Omega^*$ is a ball and $f$ is a radial nonnegative function. Actually, in this case we have the equality 
\begin{equation}
  \label{eq:interesting-observation-boundary}
\partial_\nu u_{f}  = \partial_\nu \bigl(u_{f}\bigr)^* = \partial_\nu u_{f^*}  \qquad \text{on $\partial \Omega$.}
\end{equation}
Indeed, the first equality follows from the fact that $u_f$ equals $\bigl(u_{f}\bigr)^*$ in a neighborhood of $\partial \Omega$, while the second equality follows from (\ref{talenti-classical-immediate-consequence}) and the integral identity
$$
\int_{\partial \Omega}\partial_\nu u_{f} d\sigma = \int_{\partial \Omega}\partial_\nu u_{f^*}d\sigma.  
$$
which in turn follows from (\ref{classical-Dirichlet-Poisson}), (\ref{classical-Dirichlet-Poisson-symmetrized}), the divergence theorem and the fact that $\int_{\Omega} f\,dx = \int_{\Omega}f^*\,dx$.

From now on, we fix the domain $\Omega = \Omega^*= B_1(0)$. 
Our first main result reads as follows.

\begin{satz}
  \label{first-main-theorem}
  Let $s \in (0,1)$ and $\Omega = B_1(0) \subset \R^N$.
  \begin{itemize}
  \item[(i)]   For every radial and nonnegative function $f \in L^\infty(\Omega)$, we have 
  \begin{equation}
    \label{eq:fractional-boundary-pointwise-inequality}
\frac{(u_f)^*}{\delta^s} \ge \frac{u_{f^*}}{\delta^s} \qquad \text{on $\partial \Omega$,}
  \end{equation}
  and equality holds if and only if $f = f^*$ a.e. in $\Omega$.
  \item[(ii)] For every radial nonnegative function $f \in L^\infty(\Omega)$ with $f \not  \equiv f^*$, there exist points $r \in (0,1)$ with the property that 
\begin{equation}
  \label{eq:talenti-pointwise-fractional-simple-opposite}
u_{f^*}< (u_f)^* \qquad \text{in $\Omega \setminus B_{r}(0)$,}
\end{equation}
so (\ref{eq:talenti-claim-classical}) fails. 
  \end{itemize}
\end{satz}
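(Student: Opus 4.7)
The strategy is to reduce part (i) to a one-variable rearrangement inequality against a strictly increasing weight. The starting point is the explicit Blumenthal--Getoor formula for the Dirichlet Green function $G_s$ of $(-\Delta)^s$ on $B_1$, from which a direct limit computation yields the boundary Poisson kernel
\begin{equation*}
P(x_0,y) := \lim_{x\to x_0,\ x\in B_1}\frac{G_s(x,y)}{\delta(x)^s} = \kappa_{N,s}\,\frac{(1-|y|^2)^s}{|x_0-y|^N},\qquad x_0\in\partial B_1,\ y\in B_1,
\end{equation*}
so that $\frac{u_f}{\delta^s}(x_0) = \int_{B_1}P(x_0,y)f(y)\,dy$. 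For radial $f$ this value is independent of $x_0$; averaging in $x_0$ and invoking the classical Poisson-type identity $\int_{\partial B_1}|x_0-y|^{-N}\,d\sigma(x_0) = \omega_{N-1}/(1-|y|^2)$ then gives
\begin{equation*}
\left.\frac{u_f}{\delta^s}\right|_{\partial B_1} = \kappa_{N,s}\int_{B_1}(1-|y|^2)^{s-1}f(y)\,dy.
\end{equation*}

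Since $s\in(0,1)$, the radial weight $y\mapsto(1-|y|^2)^{s-1}$ is strictly increasing in $|y|$. Passing to the one-dimensional variable $u=|y|^N/N$ recasts the above integral as a Lebesgue integral on $[0,1/N]$ against a strictly increasing weight, and under this substitution the Schwarz rearrangement $f\mapsto f^*$ corresponds to the usual decreasing rearrangement on $[0,1/N]$. The classical 1D Chebyshev-type inequality -- for nonneg $F,W$ on an interval with $W$ increasing, $\int FW\,d\mu \ge \int F^\sharp W\,d\mu$, with equality iff $F=F^\sharp$ when $W$ is strictly increasing -- then yields
\begin{equation*}
\int_{B_1}(1-|y|^2)^{s-1}f(y)\,dy \ \ge\ \int_{B_1}(1-|y|^2)^{s-1}f^*(y)\,dy,
\end{equation*}
with strict inequality whenever $f\not\equiv f^*$.

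To finish (i), I identify $u_f/\delta^s|_{\partial B_1}$ with $(u_f)^*/\delta^s|_{\partial B_1}$. Writing $g(r):=u_f(re_1)$, fractional boundary regularity gives $g(r)=(1-r)^s h(r)$ with $h\in C([0,1])$ and $h(1)=C>0$. A direct analysis of the distribution function $\mu(t):=|\{u_f\le t\}\cap B_1|$ for small $t$ -- pinching the ``defect shell'' near $\partial B_1$ between upper and lower estimates obtained from $h(r)\in[C(1-\eps),C(1+\eps)]$ close to the boundary, together with strict positivity of $u_f$ on compact subsets of $B_1$ -- yields $\mu(t)=N\omega_N(t/C)^{1/s}(1+o(1))$ as $t\to 0^+$. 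Inverting the relation $\omega_N\rho_t^N = \omega_N - \mu(t)$ then shows that the radial profile of $(u_f)^*$ satisfies $(u_f)^*(y)=C(1-|y|)^s(1+o(1))$ as $|y|\to 1$, whence $(u_f)^*/\delta^s = u_f/\delta^s$ on $\partial B_1$. Combining the three steps proves (i).

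Part (ii) is then immediate: if $f\not\equiv f^*$, the strict inequality $C_1 := (u_f)^*/\delta^s|_{\partial B_1} > u_{f^*}/\delta^s|_{\partial B_1} =: C_2$ from (i), together with the boundary asymptotics $(u_f)^*(y) - u_{f^*}(y) = (C_1 - C_2)\delta(y)^s(1+o(1))$ as $y\to\partial B_1$, forces $(u_f)^* > u_{f^*}$ on an annulus $B_1\setminus\overline{B_r}$ with $r<1$ sufficiently close to $1$. I expect the main technical obstacle to be the distribution-function step, since $u_f$ itself need not be monotone near $\partial B_1$ (only the factor $(1-r)^s$ is), so the precise leading constant in $\mu(t)$ must be recovered through an $\eps$-pinching argument rather than a direct inversion of a monotone profile.
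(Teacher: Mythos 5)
Your argument is essentially the paper's own: you first establish the constant boundary value $\frac{u_f}{\delta^s}\big|_{\partial\Omega} = c_{N,s}\int_\Omega f(y)(1-|y|^2)^{s-1}\,dy$ via the Martin/Poisson kernel (this is the paper's Lemma~\ref{sec:further-results-1-0}), then invoke a rearrangement inequality against the strictly increasing radial weight $(1-|y|^2)^{s-1}$ (your ``Chebyshev'' inequality is precisely the layer-cake computation the paper carries out, including the same equality characterization), and then identify $(u_f)^*/\delta^s$ with $u_f/\delta^s$ on $\partial\Omega$ for radial $u_f$ via an $\eps$-pinching of the distribution function near $t=0$, which is the radial specialization of the pinching argument the paper uses to prove Proposition~\ref{sym-explizit-boundary-expr} and hence Corollary~\ref{trunc-symmetry}. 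The only slips are cosmetic: the constant in your boundary kernel is off (it should carry the factor $2/s$, respectively $2^s/s$ depending on normalization, which cancels in the comparison and is therefore harmless), and the distribution-function step is stated a bit loosely but the sandwich argument you sketch is sound and matches the spirit of the paper's Proposition~\ref{sym-explizit-boundary-expr}.
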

Theorem~\ref{first-main-theorem} shows that, in the fractional radial setting, the pointwise Talenti inequality (\ref{eq:talenti-claim-classical}) holds in all of $\Omega= \Omega^*$ {\em if and only if} $f$  is already Schwarz symmetric, i.e., if $f$ is non-increasing in the radial variable. We point out that the characterization of the equality case in (\ref{eq:fractional-boundary-pointwise-inequality}) is in striking contrast to (\ref{eq:interesting-observation-boundary}) and shows that the fractional case $s \in (0,1)$ differs fundamentally from the local case $s=1$.

Theorem~\ref{first-main-theorem} leaves open the question whether the reverse version of (\ref{eq:fractional-boundary-pointwise-inequality}), namely the inequality 
  \begin{equation}
    \label{eq:fractional-boundary-pointwise-talenti-inequality}
\frac{(u_f)^*}{\delta^s} \le \frac{u_{f^*}}{\delta^s} \qquad \text{on $\partial \Omega$,}
  \end{equation}
  might hold for certain nonradial functions $f$ defined on $\Omega = \Omega^* = B_1(0)$. Note that 
(\ref{eq:fractional-boundary-pointwise-talenti-inequality}) should be seen as a boundary version of (\ref{eq:talenti-claim-classical}), so we will call (\ref{eq:fractional-boundary-pointwise-talenti-inequality}) a {\em boundary Talenti inequality} and (\ref{eq:fractional-boundary-pointwise-inequality}) a {\em reverse boundary Talenti inequality}. We have the following result regarding a class of nonradial functions.
  
\begin{satz}
  \label{second-main-theorem}
Let $\Omega = B_1(0) \subset \R^N$, let $\xi \in \Omega \setminus \{0\}$, and let $0 < \rho <\min \{|\xi|, 1-|\xi|\} $ satisfy
\begin{equation}
  \label{eq:rho-condition-intro}
\bigl(1-(|\xi|-\rho)^2\bigr)^s \le \Bigl(1 - \frac{\rho}{1-|\xi|}\Bigr)^N  
\end{equation}
Then for every nonnegative function $f \in L^\infty(\Omega)$ supported in $B_{\rho}(\xi)$, the boundary Talenti inequality (\ref{eq:fractional-boundary-pointwise-talenti-inequality}) holds, and the inequality is strict if $f \not \equiv 0$. 
\end{satz}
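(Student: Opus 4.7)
The plan is to reduce the desired inequality to a power-mean comparison on $\partial\Omega$. Using Riesz's explicit formula for the Green function of $(-\Delta)^s$ on $B_1$, one first obtains the boundary representation
\[
g_f(z) := \lim_{x \to z,\, x \in \Omega} \frac{u_f(x)}{\delta^s(x)} = \tilde\kappa \int_{\Omega} \frac{(1-|y|^2)^s}{|z-y|^N}\, f(y)\, dy, \qquad z \in \partial B_1,
\]
with a positive constant $\tilde\kappa = \tilde\kappa(N,s)$. Analyzing the distribution function $t \mapsto |\{u_f > t\}|$ for small $t$ via the continuous extension of $u_f/\delta^s$ to $\overline{\Omega}$, together with the resulting boundary layer of thickness $\sim (t/g_f(\theta))^{1/s}$, one then establishes the asymptotic
\[
\lim_{x \to \partial\Omega} \frac{(u_f)^*(x)}{\delta^s(x)} = \left(\frac{|\partial\Omega|}{\int_{\partial\Omega} g_f^{-1/s}\, d\sigma}\right)^{\!s}.
\]
Since $u_{f^*}/\delta^s \equiv g_{f^*}$ is constant on $\partial\Omega$ by radiality, (\ref{eq:fractional-boundary-pointwise-talenti-inequality}) reduces to bounding this power mean by $g_{f^*}$.

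For the lower bound on $g_{f^*}$, I would compute the spherical average of $h_z(y) := (1-|y|^2)^s |z-y|^{-N}$:
\[
\frac{1}{|\partial B_r|} \int_{\partial B_r} h_z\, d\sigma = (1-r^2)^{s-1}, \qquad r \in [0,1),
\]
where the key identity $\frac{1}{|\partial B_1|} \int_{\partial B_1} |z-\xi|^{-N}\, d\sigma(z) = (1-|\xi|^2)^{-1}$ for $\xi \in B_1$ follows from the classical Poisson-kernel representation of harmonic functions on $B_1$. Since $(1-r^2)^{s-1} \ge 1$ on $[0,1)$ for $s \in (0,1)$, integrating against the radial profile of $f^*$ yields
\[
g_{f^*} = \tilde\kappa \int_{B_\rho(0)} (1-|y|^2)^{s-1} f^*(y)\, dy \ \ge\ \tilde\kappa \|f\|_{L^1(\Omega)},
\]
which is strict whenever $f \not\equiv 0$.

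For the matching upper bound on $g_f(z)$, I would combine the geometric estimate
\[
|z-y| \ \ge\ |z-\xi| - \rho \ \ge\ |z-\xi|\Bigl(1 - \frac{\rho}{1-|\xi|}\Bigr), \qquad z \in \partial B_1,\ y \in B_\rho(\xi)
\]
(valid because $|z-\xi| \ge 1-|\xi|$) with the trivial bound $(1-|y|^2)^s \le (1-(|\xi|-\rho)^2)^s$ on $B_\rho(\xi)$. The hypothesis (\ref{eq:rho-condition-intro}) precisely collapses the two nontrivial factors and produces
\[
g_f(z) \ \le\ \tilde\kappa \|f\|_{L^1(\Omega)}\, |z-\xi|^{-N}, \qquad z \in \partial B_1.
\]
Since $u \mapsto u^{-1/s}$ is decreasing, integrating yields the power-mean bound
\[
\left(\frac{|\partial\Omega|}{\int_{\partial\Omega} g_f^{-1/s}\, d\sigma}\right)^{\!s} \ \le\ \tilde\kappa \|f\|_{L^1(\Omega)}\, \Bigl(\frac{1}{|\partial\Omega|} \int_{\partial\Omega} |z-\xi|^{N/s}\, d\sigma(z)\Bigr)^{\!-s}.
\]

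The final ingredient is a convexity estimate: since $|z-\xi|^p$ is strictly convex in $\xi \in B_1$ for $p > 1$, the spherical mean $F(\xi) := \frac{1}{|\partial\Omega|}\int_{\partial\Omega}|z-\xi|^p\, d\sigma(z)$ is strictly convex on $B_1$ and, by rotational symmetry of $\partial B_1$, has $\nabla F(0) = 0$; hence $F$ attains its minimum value $F(0) = 1$ only at $\xi = 0$. Applied with $p = N/s > 1$ and $\xi \ne 0$, this gives $F(\xi) > 1$, and chaining all the estimates yields
\[
\frac{(u_f)^*}{\delta^s}\bigg|_{\partial\Omega} \ <\ \tilde\kappa\|f\|_{L^1(\Omega)} \ \le\ g_{f^*}\ =\ \frac{u_{f^*}}{\delta^s}\bigg|_{\partial\Omega},
\]
with strict inequality whenever $f \not\equiv 0$. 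The main obstacle in executing this plan is the rigorous justification of the asymptotic formula for $(u_f)^*/\delta^s$; this requires uniform control of the remainder $u_f(x) - g_f(x/|x|)\,\delta^s(x)$ near $\partial B_1$, which should follow from the $C^\alpha$-regularity of $u_f/\delta^s$ up to the boundary \cite{ros2014dirichlet}.
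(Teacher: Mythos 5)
Your proposal is correct and follows essentially the same route as the paper: the power-mean (harmonic-mean-type) formula for $(u_f)^*/\delta^s$ on $\partial\Omega$, the Martin kernel representation $g_f$, the two geometric estimates on $B_\rho(\xi)$ combined with hypothesis~(\ref{eq:rho-condition-intro}), the convexity inequality $T_{N,N/s}(\xi)>1$ for $\xi\neq 0$, and the lower bound $g_{f^*}\geq \tilde\kappa\|f\|_{L^1}$ via $(1-r^2)^{s-1}\geq 1$. The only organizational difference is minor (you apply the hypothesis before the power-mean step and draw strictness from $T_{N,N/s}(\xi)>1$ rather than from $(1-|y|^2)^{s-1}>1$), and the asymptotic formula you flag as the main technical obstacle is precisely what the paper establishes in Proposition~\ref{sym-explizit-boundary-expr} via truncation and squeezing, so the gap you identify is real but is closed by the same $C^\alpha(\overline\Omega)$-regularity and Hopf-type lower bound you cite.
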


For every $\xi \in \Omega \setminus \{0\}$, the assumption (\ref{eq:rho-condition-intro}) is clearly satisfied for $\rho$ sufficiently small (depending on $|\xi|$ and $1-|\xi|$). We note that, by approximation, Theorem~\ref{second-main-theorem} implies a symmetrization inequality for the Green function $G$ for (\ref{fractional-Dirichlet-Poisson}) on $\Omega = B_1(0)$. Recall that, for fixed $\xi \in \Omega \setminus \{0\}$, the function $G(\cdot,\xi)$ is a distributional solution of the problem
\begin{equation}
\label{distributional-Dirichlet-Poisson}  
\left\{
\begin{aligned}
(-\Delta)^su &= \delta_\xi &&\qquad \text{in $\Omega,$} \\
u& = 0 &&\qquad \text{on $\mathbb{R}^N\setminus\Omega$}
\end{aligned}
\right.
\end{equation}
where $\delta_\xi$ denotes the Dirac measure at the point $\xi$. Note that, at least formally, we have $(\delta_{\xi})^* = \delta_0$, and therefore the strict boundary Talenti inequality related to this distributional problem reads as follows:
\begin{equation}
  \label{eq:green-boundary-talenti}
\frac{G(\cdot,\xi)^*}{\delta^s}< \frac{G(\cdot,0)}{\delta^s} \quad \text{on $\partial \Omega$}\qquad \text{for every $\xi \in \Omega \setminus \{0\}$.}  
\end{equation}
We will establish this inequality as a byproduct of the estimates needed in the proof of Theorem~\ref{second-main-theorem}, see Proposition~\ref{sym-explizit-boundary-expr-green} below.

The contrast between (\ref{eq:interesting-observation-boundary}) and (\ref{eq:fractional-boundary-pointwise-inequality}) raises the question if $s$ is a borderline order for (\ref{eq:fractional-boundary-pointwise-inequality}) and if this inequality might be reversed in the case of the associated higher order problem $s>1$. Indeed this is the case, as we show in Theorem~\ref{first-main-theorem-s-greater-1}, and we note that the inequality given there also holds for the polyharmonic Dirichlet problem which corresponds to the case where $s$ is an integer greater than or equal to $2$.

This work is organized as follows. Section~\ref{sec:prel-notat} contains preliminaries and basic notations. In Section \ref{sec:reverse-bound-talent} we will complete the proof of Theorem~\ref{first-main-theorem}, and Section~\ref{sec:bound-talenti-ineq} is devoted to the proof of Theorem~\ref{second-main-theorem}.

\section{Preliminaries and notation}
\label{sec:prel-notat}
In this section, we introduce some basic notation, and we collect some preliminary results on the fractional maximum principle, Green and Martin kernels, and on Schwarz symmetrization. We start with the following lemma, proved in \cite[Lemma 7.3]{rosoton2015nonlocal}.

\begin{lemma}(Fractional strong maximum principle and Hopf's lemma)\label{Hopf-lemma}
Let $\Omega\subset\mathbb{R}^N$ be a bounded $C^{1,1}$ domain, $f \in L^\infty(\Omega)$ nonnegative, and let $u= u_f$ be the unique weak solution of (\ref{fractional-Dirichlet-Poisson}). Then, either
\[
\inf_{\overline \Omega} \frac{u}{\delta^s} >0 \qquad \text{or}\qquad u\equiv 0 \quad \text{in $\Omega$.}
\]
\end{lemma}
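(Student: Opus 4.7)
The proof splits into two classical ingredients of the nonlocal theory, executed in order: (a) a strong maximum principle inside $\Omega$, and (b) a Hopf-type boundary barrier estimate. Together with the boundary regularity $u/\delta^s \in C^\alpha(\overline{\Omega})$ recalled after \eqref{fractional-Dirichlet-Poisson} (from \cite{ros2014dirichlet}), these reduce the statement to the strict positivity of the continuous extension of $u/\delta^s$ on the compact set $\overline{\Omega}$ whenever $u \not\equiv 0$.

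First, nonnegativity $u \geq 0$ in $\R^N$ follows by the standard weak maximum principle argument, i.e.\ testing the weak formulation \eqref{eq:defining-weak-sol} against $u^- \in \mathscr{H}^s_0(\Omega)$. For the strong maximum principle, assume $u \not\equiv 0$ and suppose toward contradiction that $u(x_0) = 0$ at some $x_0 \in \Omega$. Interior regularity from \cite{ros2014dirichlet} places $u$ in $C^{2s+\alpha}_{\mathrm{loc}}(\Omega)$, so the pointwise representation \eqref{eq:pointwise-def-fractional-laplace} yields
\[
0 \leq f(x_0) = (-\Delta)^s u(x_0) = -c_{N,s}\int_{\R^N}\frac{u(y)}{|x_0-y|^{N+2s}}\,dy,
\]
whose right-hand side is strictly negative since $u \geq 0$ is not identically zero --- a contradiction. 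Hence $u > 0$ throughout $\Omega$.

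The harder step is the quantitative boundary bound $u \geq c\,\delta^s$ in a one-sided neighborhood of $\partial \Omega$. I would obtain this via a fractional barrier argument: the $C^{1,1}$ regularity of $\partial \Omega$ furnishes a uniform interior ball condition at every boundary point, and on such an interior ball $B_r(x_B) \subset \Omega$ the explicit torsion profile $\phi(x) := (r^2-|x-x_B|^2)_+^s$ satisfies $(-\Delta)^s \phi \equiv \kappa > 0$ in $B_r(x_B)$, vanishes outside, and is comparable to $\delta^s$ near the tangent boundary point. Combining this profile with the strict positivity $u \geq m > 0$ on a slightly smaller concentric ball (available from the strong maximum principle step above) and applying the nonlocal weak maximum principle to $u - \varepsilon\phi$ on an appropriate subdomain yields $u \geq c\phi$ near each boundary point, with a constant uniform along $\partial \Omega$ by a compactness argument. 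The main obstacle is precisely this nonlocal comparison: the barrier must be engineered so that its far-field contributions do not destroy the comparison, which is why the explicit torsion profile on a ball --- with its vanishing exterior --- is the canonical choice.

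Finally, combining interior positivity, the uniform boundary bound, and the continuity of $u/\delta^s$ on $\overline{\Omega}$, the quotient extends to a strictly positive continuous function on the compact set $\overline{\Omega}$ and therefore attains a strictly positive infimum, as required.
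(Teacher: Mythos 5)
Note first that the paper does not prove this lemma: it quotes it verbatim from \cite[Lemma 7.3]{rosoton2015nonlocal}, so there is no in-paper argument to compare against. Your reconstruction follows the right architecture for a proof from scratch (weak maximum principle $\Rightarrow u\geq 0$; strong maximum principle $\Rightarrow u>0$ in $\Omega$; boundary barrier $\Rightarrow u\geq c\,\delta^s$ near $\partial\Omega$; combine with the continuity of $u/\delta^s$ on $\overline\Omega$), but the barrier step as written contains a genuine sign error. The torsion profile $\phi(x)=(r^2-|x-x_B|^2)_+^s$ satisfies $(-\Delta)^s\phi\equiv\kappa>0$ throughout $B_r(x_B)$, so on any comparison subdomain $D\subset B_r(x_B)$ one has $(-\Delta)^s(u-\eps\phi)=f-\eps\kappa$ in $D$, and the nonlocal weak maximum principle applied to $u-\eps\phi$ would require $f\geq\eps\kappa$ there. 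Since $f$ is only assumed nonnegative and may vanish identically near $\partial\Omega$, no admissible $\eps>0$ exists. The obstacle is thus not the far-field mass of the barrier, as you suggest, but the sign of $(-\Delta)^s\phi$ inside the annulus where you compare: the torsion function is a subsolution there, not a supersolution. The standard fix is either to use the $s$-harmonic barrier $w$ solving $(-\Delta)^sw=0$ in $B_r(x_B)\setminus\overline{B_{r/2}(x_B)}$ with $w=m$ on $\overline{B_{r/2}(x_B)}$ and $w=0$ outside $B_r(x_B)$, or to modify your profile to $\tilde\phi=\phi+M\,\mathbf{1}_{B_{r/2}(x_B)}$ with $M$ so large that the extra exterior mass forces $(-\Delta)^s\tilde\phi\leq 0$ in the annulus. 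Either choice closes the comparison.

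There is also a secondary regularity gap in the strong maximum principle step: you invoke $u\in C^{2s+\alpha}_{\mathrm{loc}}(\Omega)$ to evaluate $(-\Delta)^su$ pointwise at $x_0$, but for $f\in L^\infty(\Omega)$ the interior estimates of \cite{ros2014dirichlet} only yield $u\in C^{2s-\eps}_{\mathrm{loc}}$, which sits just below the threshold for \eqref{eq:pointwise-def-fractional-laplace} to make classical sense; one needs $f$ locally H\"older for the gain to $C^{2s+\alpha}_{\mathrm{loc}}$. This is fixable (argue at the interior minimum in the viscosity sense, or invoke the weak Harnack inequality for nonnegative supersolutions), but the step as written is not justified.
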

Here, as before $\delta(x)= \dist(x,\partial \Omega)$ for $x \in \Omega$, and we consider the unique extension of the function
$x \mapsto \frac{u(x)}{\delta^s(x)}$ to $\overline \Omega$.

Next we restrict our attention to the case where $\Omega= B_1(0)$. In this case, for given $f \in L^\infty(\Omega)$, the unique weak solution $u = u_f$ of (\ref{fractional-Dirichlet-Poisson}) is given pointwisely by
\begin{equation}
  \label{eq:green-function-formula}
u(x)= \int_{\Omega} G_s(x,y)f(y)\,dy,
\end{equation}
where $G_s$ is the fractional Green function for $(-\Delta)^s$ in $\Omega= B_1(0)$, for $s \in (0,1)$. Clearly, (\ref{eq:green-function-formula}) also holds in the case $s=1$, i.e., for the unique weak solution of (\ref{classical-Dirichlet-Poisson}) if $G_1$ is the classical Green function for $-\Delta$ in $\Omega$. We recall the following explicit form of $G_s$ which has been given in \cite{2010_Blumenthal}.
\begin{lemma}
  \label{sec:prel-notat-1}
Let $s \in (0,1]$. If $N\neq 2s$, we have 
\begin{align}\label{green-func}
    G(x,y)=\kappa_{N,s}|x-y|^{2s-N}\int_0^{r_0(x,y)}\frac{t^{s-1}}{(t+1)^{\frac{N}{2}}}dt
\end{align}
with
\begin{equation}
  \label{def-kappa-N-s}
\kappa_{N,s}:=\frac{\Gamma(\frac{N}{2})}{\pi^{\frac{N}{2}}4^s\Gamma(s)^2}\qquad \text{and}\qquad r_0(x,y)=\frac{(1-|x|^2)(1-|y|^2)}{|x-y|^2}, 
\end{equation}
while for $N=2s$ we have 
\begin{align}\label{green-func-1/2}
    G(x,y)=\kappa_{N,s}\log\Big(\frac{1^2-xy+\sqrt{(1-x^2)(1-y^2)}}{|x-y|}\Big).
\end{align}
\end{lemma}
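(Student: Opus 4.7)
This lemma restates a classical formula attributed to \cite{2010_Blumenthal}; my plan is therefore to treat the proof essentially as a citation to that reference and to sketch only the scheme of a self-contained PDE verification. The core idea is: denote the right-hand side of (\ref{green-func}) by $\Gamma(x,y)$, extend $\Gamma(\cdot,y)$ by zero outside $B_1$, and check the two defining properties of the Green function for (\ref{fractional-Dirichlet-Poisson}): (i) the exterior vanishing $\Gamma(\cdot,y)\equiv 0$ on $\R^N\setminus B_1$, and (ii) the distributional identity $(-\Delta)^s_x\Gamma(x,y)=\delta_y(x)$ on $B_1$. Uniqueness of weak solutions together with the representation (\ref{eq:green-function-formula}) then forces $\Gamma=G_s$.

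Property (i) is transparent from the definition (\ref{def-kappa-N-s}) of $r_0(x,y)$: when $|x|\to 1^-$ or $|y|\to 1^-$, the upper limit $r_0(x,y)$ of the integral tends to $0$, so $\Gamma(x,y)\to 0$. For (ii) I would exploit the factorization
\[
\Gamma(x,y)=\kappa_{N,s}|x-y|^{2s-N}\,I(r_0(x,y)),\qquad I(r):=\int_0^r \frac{t^{s-1}}{(t+1)^{N/2}}\,dt.
\]
The prefactor $|x-y|^{2s-N}$ is, up to a constant, the Riesz kernel of $(-\Delta)^s$ on $\R^N$. At the diagonal $y=x$ one has $r_0(x,y)\to\infty$ and hence $I(r_0)\to B(s,\tfrac{N}{2}-s)=\Gamma(s)\Gamma(\tfrac{N}{2}-s)/\Gamma(\tfrac{N}{2})$; combined with $\kappa_{N,s}$ this Beta-function value produces exactly the Riesz-kernel constant $\Gamma(\tfrac{N}{2}-s)/(4^s\pi^{N/2}\Gamma(s))$, so the $\delta_y$-singularity emerges with the correct coefficient.

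The main obstacle is the off-diagonal claim that $(-\Delta)^s_x\Gamma(x,y)\equiv 0$ for $x\in B_1\setminus\{y\}$. A head-on singular-integral computation is technical; the cleanest routes I would consider are (a) a M\"obius/Kelvin inversion sending $B_1$ to a half-space, where the analogous Green function reduces to an elementary computation and the conformal covariance of $(-\Delta)^s$ transports the result back to the ball, or (b) the probabilistic identification of $\Gamma(x,y)$ with the expected occupation-time density at $y$ of the isotropic $2s$-stable L\'evy process started at $x$ and killed on exit from $B_1$, together with the Riesz--Bogdan potential-theoretic formulas. Route (a) is likely shortest in the PDE framework.

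Finally, for the exceptional case $N=2s$ the prefactor $|x-y|^{2s-N}$ degenerates and must be replaced by the logarithmic fundamental solution of $(-\Delta)^{N/2}$; the expression (\ref{green-func-1/2}) is then recovered either by passing to the limit $s\to N/2$ in (\ref{green-func}) after subtracting the divergent contribution near $t=0$, or by running the same three-step scheme ab initio with $\log(1/|x-y|)$ in place of the Riesz kernel.
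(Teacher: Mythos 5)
The paper itself offers no proof of this lemma: it is stated as a recollection of the explicit Green-function formula proved in \cite{2010_Blumenthal}, so treating it as a citation, as you do, is exactly the paper's approach.

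Your additional verification sketch is plausible in outline, but two caveats are worth recording. First, your Beta-function computation
\[
I(\infty)=\int_0^\infty \frac{t^{s-1}}{(1+t)^{N/2}}\,dt = B\bigl(s,\tfrac{N}{2}-s\bigr)
\]
is only valid when $N>2s$; the hypotheses $s\in(0,1]$, $N\neq 2s$ also admit $N<2s$ (namely $N=1$ with $s\in(\tfrac12,1]$), where the integral diverges as the upper limit grows. In that regime $r_0(x,y)\to\infty$ produces a divergence that exactly cancels the vanishing prefactor $|x-y|^{2s-N}$, so $\Gamma(x,y)$ stays bounded near the diagonal; the $\delta_y$-singularity must then be recovered distributionally rather than from a pointwise blow-up, and the clean matching of the Riesz constant you describe needs a different argument. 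Second, the heart of the matter---the off-diagonal $s$-harmonicity $(-\Delta)^s_x\Gamma(x,y)=0$ for $x\in B_1\setminus\{y\}$---is left as a pointer to Kelvin inversion or to the probabilistic identification; you correctly flag it as the main obstacle, but as written it is an acknowledged gap rather than a completed step. Since the paper's own ``proof'' is a citation, none of this affects agreement with the paper, but a genuinely self-contained derivation would have to fill both points in.
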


As a consequence of (\ref{eq:green-function-formula}), the function $u = u_f$ satisfies, for given $f \in L^\infty(\Omega)$,
$$
\frac{u(x)}{\delta^s(x)} = \int_{\Omega}\frac{G(x,y)}{\delta^s(x)}f(y)\,dy\qquad  \text{for $x \in \Omega$.}
$$
By Lebesgue's theorem and straightforward estimates, a boundary version of this formula can be derived. More precisely, we have
\begin{equation}
  \label{eq:martin-kernel-boundary-formula}
\frac{u(\vartheta)}{\delta^s(\vartheta)} = \int_{\Omega}M_s(y,\vartheta)f(y)\,dy \qquad \text{for $\vartheta \in \partial \Omega$.}
\end{equation}
with the {\em fractional Martin kernel}
\begin{align*}
M_s: \Omega \times \partial \Omega \to \R, \qquad M_s(y,\vartheta):=\lim_{z\rightarrow\vartheta, z\in B_1}\frac{G_s(y,z)}{\delta^s(z)}=\lim_{z\rightarrow\vartheta, z\in B_1}\frac{2G_s(y,z)}{(1-|z|^2)^s}  \quad \text{for $y\in \Omega, \vartheta\in \partial \Omega$.}
\end{align*}
An explicit formula for the Martin kernel can be derived from Lemma~\ref{sec:prel-notat-1} for $\Omega= B_1(0)$. More precisely, it has been proved in \cite{abatangelo2018green} that for $s > 0$ and $N\geq 1$ we have 
\begin{equation}
  \label{explicit-martin-ball}
        M_s(x,\vartheta)=\frac{2\kappa_{N,s}}{s}\frac{(1-|x|^2)^s}{|\vartheta-x|^N}\ \ \ \ \ \ \ \ \text{for} \ \ x\in B_1(0), \vartheta\in \partial B_1(0),
    \end{equation}
    where $\kappa_{N,s}$ is defined in (\ref{def-kappa-N-s})\footnote{Note that the definition of the Martin kernel in \cite{abatangelo2018green} differs by a factor $2$ from our definition}. In the case $s=1$, this formula reduced to the representation of the classical Poisson kernel of the Dirichlet-Laplacian on $\Omega= B_1(0)$, which is given by
\begin{equation}
  \label{explicit-poisson-ball}
    P(x,\vartheta)= \frac{1}{\omega_{N-1}}\frac{1-|x|^2}{|\vartheta-x|^N} \qquad \text{for} \ \ x\in B_1(0), \vartheta\in \partial B_1(0),
\end{equation}
Here and in the following, $\omega_{N-1}= \frac{2 \pi^{N/2}}{\Gamma(N/2)}$ denotes the $N-1$-dimensional measure of the unit sphere $S^{N-1}$. Since the classical Poisson kernel satisfies the well-known identity
$$
\int_{\partial \Omega} P(x,\vartheta)\,d\sigma(\theta)= 1 \qquad \text{for all $x \in \Omega$,}
$$
it follows that
\begin{equation*}
\int_{\partial B_1(0)} \frac{1}{|\vartheta-x|^N}d\sigma(\vartheta) = \frac{\omega_{N-1}}{1-|x|^2} \qquad \text{for all $x \in B_1(0)$}
\end{equation*}
and therefore
\begin{equation}
  \label{eq:formula-spherical-integral-martin-kernel}
\int_{\partial B_1(0)}M_s(x,\vartheta)d\sigma(\vartheta) = \frac{2\kappa_{N,s}\, \omega_{N-1} }{s}(1-|x|^2)^{s-1} \qquad \text{for all $x \in B_1(0)$.}
\end{equation}

This identity gives rise to the following formula for the fractional boundary derivative of a solution of (\ref{fractional-Dirichlet-Poisson})  in the case where $\Omega= B_1(0)$ and $f$ is a radial function on $\Omega$.

\begin{lemma}
  \label{sec:further-results-1-0}
Let $\Omega= B_1(0)$, and let $f \in L^\infty(\Omega)$ be a radial function. Then  the unique solution $u = u_f$ of (\ref{fractional-Dirichlet-Poisson}) satisfies
  \begin{equation}
    \label{eq:general-radial-fractional-boundary-deriv}
  \frac{u_f}{\delta^s} \equiv \frac{2 \kappa_{N,s}}{s} \int_{\Omega} f(y)(1-|y|^2)^{s-1}dy \qquad \text{on $\partial \Omega$.}  
\end{equation}
\end{lemma}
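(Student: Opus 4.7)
The plan is to combine the Martin kernel representation (\ref{eq:martin-kernel-boundary-formula}) with the spherical integral identity (\ref{eq:formula-spherical-integral-martin-kernel}), exploiting the radial symmetry of $f$ to reduce the boundary value of $u_f/\delta^s$ to a constant.

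First, I would observe that since $f$ is radial and the fractional Laplacian $(-\Delta)^s$ commutes with rotations, the unique weak solution $u_f \in \mathscr{H}^s_0(\Omega)$ must also be radial: for any orthogonal transformation $R \in O(N)$, the function $u_f \circ R$ lies in $\mathscr{H}^s_0(\Omega)$ and solves the same Dirichlet--Poisson problem with right-hand side $f \circ R = f$, so by uniqueness $u_f \circ R = u_f$. Consequently the boundary function $\vartheta \mapsto u_f(\vartheta)/\delta^s(\vartheta)$ (understood via the continuous extension of $u_f/\delta^s$ up to $\overline \Omega$) is constant on $\partial \Omega = S^{N-1}$.

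Given that this quantity is constant, I would compute its value by averaging over the sphere. Using the Martin kernel formula (\ref{eq:martin-kernel-boundary-formula}) and Fubini's theorem (the integrand is nonnegative up to the sign of $f$, and $M_s$ is bounded by the integrability properties established earlier), for any fixed $\vartheta_0 \in \partial \Omega$ I would write
\begin{equation*}
\frac{u_f(\vartheta_0)}{\delta^s(\vartheta_0)}
= \frac{1}{\omega_{N-1}} \int_{\partial \Omega} \frac{u_f(\vartheta)}{\delta^s(\vartheta)}\,d\sigma(\vartheta)
= \frac{1}{\omega_{N-1}} \int_{\Omega} f(y) \int_{\partial \Omega} M_s(y,\vartheta)\,d\sigma(\vartheta)\,dy.
\end{equation*}
Inserting (\ref{eq:formula-spherical-integral-martin-kernel}) in the inner integral then yields exactly (\ref{eq:general-radial-fractional-boundary-deriv}).

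There is no real obstacle here; the only subtle point is justifying that $u_f/\delta^s$ admits a continuous radial boundary trace, but this is already guaranteed by the regularity statement $u_f/\delta^s \in C^\alpha(\overline \Omega)$ recalled in the introduction together with the rotational-invariance argument above. Everything else is a direct application of the two formulas (\ref{eq:martin-kernel-boundary-formula}) and (\ref{eq:formula-spherical-integral-martin-kernel}) already recorded in this section.
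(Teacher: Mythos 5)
Your argument is correct and follows exactly the paper's route: radiality of $u_f$ (hence constancy of $u_f/\delta^s$ on $\partial\Omega$), spherical averaging of the Martin kernel representation (\ref{eq:martin-kernel-boundary-formula}), Fubini, and the identity (\ref{eq:formula-spherical-integral-martin-kernel}). The only difference is that you spell out why $u_f$ is radial, which the paper takes for granted.
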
  

\begin{proof}
  Since $u:=u_f$ is a radial function, we can use (\ref{eq:martin-kernel-boundary-formula}) and (\ref{eq:formula-spherical-integral-martin-kernel}) to write the unique value of $\frac{u_f}{\delta^s}$ on $\partial \Omega$ as 
  \begin{align*}
    \frac{u_f}{\delta^s} &=\frac{1}{\omega_{N-1}} \int_{S^{N-1}} \int_{\Omega}M_s(y,\vartheta)f(y)dy d\sigma(\vartheta)\\
                       &=\frac{1}{\omega_{N-1}}  \int_{\Omega} f(y) \int_{S^{N-1}} M_s(y,\vartheta)d\sigma(\vartheta)  dy = \frac{2 \kappa_{N,s}}{s} \int_{\Omega} f(y)(1-|y|^2)^{s-1}dy, 
  \end{align*}
as claimed.
\end{proof}

\subsection{Rearrangement and Symmetrization}
\label{sec:rearr-symm}

We use this section to recall some basic and fundamental properties of the Schwarz symmetrization. We refer to \cite{kesavan-2006} and \cite{zbMATH01601796} for more details. In the following, we will denote the Lebesgue measure of a subset $\Omega \subset \mathbb{R}^N$ by $|\Omega|$. \\ \\
For a measurable subset $\Omega \subset \mathbb{R}^N$ with $|\Omega|<\infty$, we denote by $\Omega^*$ the open ball with radius $r = \left( \frac{|\Omega|}{\omega_N} \right)^\frac{1}{N}$, implying that the Schwarz symmetrization is volume preserving, i.e., $|\Omega| = |\Omega^*|$. For unbounded $\Omega$, we set $\Omega^* = \mathbb{R}^N$.
\\ \\
Next, let $f: \R^N \to [0,\infty]$ be a measurable function such that its distribution satisfies
\[
\mu_f(t) = |\{ f > t \}| < \infty \quad \text{for all} \ t > 0.
\]
We define the Schwarz symmetrization $f^*: \R^N \to [0,\infty]$ of the function $f$ as
\[
f^*(x) = \sup \{ t > 0 : \mu_f(t) > \frac{\omega_{N-1}}{N} |x|^N \}.
\]
Here, as before, $\omega_{N-1}$ is the $N-1$-dimensional measure of $S^{N-1}$, so $\frac{\omega_{N-1}}{N}$ is the $N$-dimensional measure of the unit ball in $\R^N$. In particular, the Schwarz symmetrization is a generalized right inverse of the distribution function $\mu_f(t)$. One can equivalently define the Schwarz symmetrization using the Layer-Cake-Respresentation (LCR) in terms of the integral
\[
f^*(x)=\int_0^\infty\mathbf{1}_{\{|f|>t\}^*}(x)dt.
\]
We call $f$ Schwarz symmetric if $f$ coincides with $f^*$. Notice that a function $f$ is Schwarz symmetric if and only if it is radial and  non-increasing and right-continous in the radial variable.
One main property of the Schwarz symmetrization is the equimeasurablity $\mu_f(t)=\mu_{f^*}(t)$, which in particular implies that 
\[
\lVert f^* \rVert_{L^p(\R^N)} = \lVert f \rVert_{L^p(\R^N)} \qquad \text{for all $f\in L^p(\R^N)$.}
\]
In the following, let $\Omega = B_1(0)$. To define the Schwarz symmetrization $f^*:\Omega \to [0,\infty]$ of a measurable function $f: \Omega \to [0,\infty]$, we first pass to the trivial extension of $f$ on all of $\R^N$ and then use the definition given above. 

Next, we provide an $s$-harmonic mean type formula for the fractional normal derivative $\frac{u^*}{\delta^s}$ of the Schwarz symmetrization of a class of functions $u$ on $\Omega=B_1(0)$.

\begin{prop}\label{sym-explizit-boundary-expr}
  Let $s >0$, and let $u: \Omega \to [0,\infty]$ be a measurable function with the properties that
  $$
  \frac{u}{\delta^s} \in C(\overline{\Omega \setminus B_r(0)}) \quad \text{for some $r \in (0,1)$}\qquad\; \text{and}\qquad\; \essinf_{\overline \Omega}\frac{u}{\delta^s}>0.
  $$
    Then we have
  \begin{equation}
    \label{boundary-fractional-deriv-symm-formula}
  \frac{u^*}{\delta^s}= \Bigg(\frac{1}{\omega_{N-1}}\int_{S^{N-1}} \Bigl(\frac{u}{\delta^s}(\vartheta)\Bigr)^{-\frac{1}{s}} d\sigma(\vartheta)\Bigg)^{-s} \qquad  \text{on  $\partial \Omega$.}
  \end{equation}
\end{prop}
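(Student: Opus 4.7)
The plan is to exploit that $u^*$ is radial: the value of $\frac{u^*}{\delta^s}$ is constant on $\partial \Omega$ and equals the one-dimensional limit $\lim_{\rho \to 1^-} \frac{u^*(\rho \vartheta_0)}{(1-\rho)^s}$ for any fixed $\vartheta_0 \in S^{N-1}$. I will translate this limit into an asymptotic expansion of the distribution function of $u$ as $t \to 0^+$ via equimeasurability, then read off the boundary value from the hypothesis.

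Setting $h:=u/\delta^s$, the first step uses that $\{u^* > t\}$ is an open ball $B_{\rho(t)}(0)$ with $|B_{\rho(t)}(0)| = \mu_u(t)$ by equimeasurability, so $|\{u \le t\}| = |B_1(0)| - |B_{\rho(t)}(0)| = \frac{\omega_{N-1}}{N}(1 - \rho(t)^N)$. Taylor expansion then gives $1-\rho(t) = \frac{|\{u \le t\}|}{\omega_{N-1}}(1+o(1))$ as $t \to 0^+$, and the desired limit equals $\lim_{t \to 0^+} \frac{t}{(1-\rho(t))^s}$.

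The main step is to establish the asymptotic
$$|\{u \le t\}| = t^{1/s}\int_{S^{N-1}} h(\vartheta)^{-1/s}\, d\sigma(\vartheta) + o(t^{1/s}) \qquad \text{as } t \to 0^+. \qquad (\ast)$$
The lower bound $u \ge h_{\min}\delta^s$, with $h_{\min}:= \essinf_{\overline \Omega} h > 0$, confines $\{u \le t\}$ to the thin tubular neighborhood $\{\delta \le (t/h_{\min})^{1/s}\}$ of $\partial \Omega$. In the coordinates $x = (1-\tau)\vartheta$ with $\vartheta \in S^{N-1}$ and $\tau \in [0,1-r]$, the volume element is $(1-\tau)^{N-1}d\tau\,d\sigma(\vartheta)$ and $\delta(x)=\tau$. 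Uniform continuity of $h$ on $\overline{B_1(0) \setminus B_r(0)}$ yields, for each $\eps > 0$ and all $\tau$ sufficiently small,
$$\tau^s(h(\vartheta)-\eps) \le u((1-\tau)\vartheta) \le \tau^s(h(\vartheta)+\eps) \qquad \text{uniformly in } \vartheta \in S^{N-1},$$
so the one-dimensional Lebesgue measure of $\{\tau : u((1-\tau)\vartheta) \le t\}$ is sandwiched between $(t/(h(\vartheta)+\eps))^{1/s}$ and $(t/(h(\vartheta)-\eps))^{1/s}$. Integrating over $\vartheta$ against $(1-\tau)^{N-1}=1+O(\tau)$, then letting $t \to 0$ followed by $\eps \to 0$, produces $(\ast)$.

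Plugging $(\ast)$ into the expression for $1-\rho(t)$ gives $(1-\rho(t))^s/t \to \omega_{N-1}^{-s}\bigl(\int_{S^{N-1}} h^{-1/s}d\sigma\bigr)^s$ as $t \to 0^+$, and inverting yields (\ref{boundary-fractional-deriv-symm-formula}). The main obstacle is the careful verification of $(\ast)$: since $u$ is not assumed monotone in $\tau$, the one-dimensional set $\{\tau : u((1-\tau)\vartheta) \le t\}$ is not a priori an interval, and only the essinf bound confines it to a small $\tau$-range in which uniform continuity of $h$ allows the two-sided squeeze to go through.
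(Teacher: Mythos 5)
Your argument is correct and reaches the same formula, but by a genuinely different route than the paper. The paper introduces the explicit auxiliary profile $h(x)=\psi(x/|x|)(1-|x|)^s$ with $\psi=u/\delta^s$, computes $h^*$ and its boundary fractional derivative by inverting the implicit equation for $\mu_h$ via a Taylor expansion, and then transfers the result to $u$ by a two-sided sandwich $((1-\eps)h)_\tau \le u_\tau \le ((1+\eps)h)_\tau$ of truncations near $\partial\Omega$, exploiting that truncation commutes with Schwarz symmetrization and that $*$ is order-preserving. You instead compute the small-$t$ asymptotics of the distribution function directly, via the polar slicing $|\{u\le t\}| = \int_{S^{N-1}}\int_0^{\tau_0} \mathbf{1}_{\{u((1-\tau)\vartheta)\le t\}}(1-\tau)^{N-1}d\tau\,d\sigma(\vartheta) + o(t^{1/s})$, and then read off the boundary limit of the radially non-increasing $u^*$. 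Both proofs rest on exactly the same two hypotheses in exactly the same way (the essential infimum confines the relevant level sets to a thin boundary shell where the uniform continuity of $u/\delta^s$ applies), so neither is more general; yours avoids the auxiliary function and the truncation/monotonicity machinery, at the cost of having to handle the slicing and the double limit in $t$ and $\eps$ directly, which you do correctly (the $t\to 0$, then $\eps\to 0$ order is the right one, with dominated convergence in $\eps$ thanks to the lower bound on $h$). One small step you state without proof is that $\frac{u^*}{\delta^s}$ on $\partial\Omega$ equals $\lim_{t\to 0^+}\frac{t}{(1-\rho(t))^s}$: since the radial profile of $u^*$ is only a generalized inverse of $t\mapsto\rho(t)$, this reparametrization needs a word of justification, which does follow from monotonicity of $\rho(\cdot)$ together with the asymptotics $(\ast)$ you establish (so the two one-sided limits in the $\rho$-variable are pinched by $(c\pm\delta)^{-s}$), but it should be spelled out.
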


\begin{proof}
  To simplify the notation, we set $\psi: = \frac{u}{\delta^s}$, and we define the nonnegative function $h \in L^\infty(\Omega) \cap C(\Omega \setminus \{0\})$ by 
  $$
  h(x) = \psi\Big(\frac{x}{|x|}\Big)(1-|x|)^s= \psi\Big(\frac{x}{|x|}\Big)\delta^s(x) \quad \text{for $x \in \Omega \setminus \{0\}$},\qquad h(0)= \psi_0:= \max_{S^{N-1}}\psi. 
  $$
  Then $h$ is strictly decreasing in $|x| \in (0,1)$. In a first step, we show that (\ref{boundary-fractional-deriv-symm-formula}) holds for $h$ in place of $u$. To compute the Schwarz symmetrized function $h^*$, we first note that $\mu_{h}(t) = 0$ for $t\ge \psi_0$. For $t < \psi_0$, we get
\[
\{h>t\} = \Bigl \{x \in \Omega \::\: |x|< 1- \Bigl(\frac{t}{\psi(\frac{x}{|x|})}\Bigr)^{\frac{1}{s}}\Bigr\},
\]
which leads us to 
$$
  \mu_{h}(t) :=  \frac{1}{N} \int_{S^{N-1}} \Bigl(1- \Bigl(\frac{t}{\psi(\theta)}\Bigr)^{\frac{1}{s}}\Bigr)^N d \sigma(\theta).
  $$
  Therefore, for $x \in (0,1)$, the value $h^*(x)$ is implicitly given by the equation
  \begin{equation}
    \label{eq:implicit-eq}
  |x|^N = \frac{1}{\omega_{N-1}} \int_{S^{N-1}} \Bigl(1- \Bigl(\frac{h^*(x)}{\psi(\theta)}\Bigr)^{\frac{1}{s}}\Bigr)^N d \sigma(\theta).
  \end{equation}
Taylor expanding the integrand on the RHS gives
\begin{align*}
  &\frac{1}{\omega_{N-1}} \int_{S^{N-1}} \Bigl(1- \Bigl(\frac{h^*(x)}{\psi(\theta)}\Bigr)^{\frac{1}{s}}\Bigr)^N d \sigma(\theta) = 1 - \frac{N}{\omega_{N-1}}(h^*(x))^{\frac{1}{s}} \int_{S^{N-1}}\psi(\theta)^{-\frac{1}{s}}d\sigma(\theta) + o\Bigl((h^*(x))^{\frac{1}{s}}\Bigr)
\end{align*}
as $|x| \to 1$, so
$$
\Bigl(\frac{h^*(x)}{(1-|x|^N)^s}\Bigr)^{\frac{1}{s}} \frac{N}{\omega_{N-1}} \int_{S^{N-1}}\psi(\theta)^{-\frac{1}{s}}d\sigma(\theta)= 1 + o\Bigl(\Bigl(\frac{h^*(x)}{(1-|x|^N)^s}\Bigr)^{\frac{1}{s}}\Bigr)\qquad \text{as $|x| \to 1$}
$$
and therefore
\begin{equation}
  \label{eq:h-star-prop}
  \frac{h^*}{\delta^s}\equiv \lim_{|x| \to 1}\frac{h^*(x)}{(1-|x|)^s} = N \lim_{|x| \to 1}\frac{h^*(x)}{(1-|x|^N)^s}= \Bigl(\frac{1}{\omega_{N-1}}\int_{S^{N-1}}\psi(\theta)^{-\frac{1}{s}}d\sigma(\theta)\Bigr)^{-s} \qquad \text{on $\partial \Omega$},
\end{equation}
as claimed. Next, we use the notation $v_\tau$ to define the truncation of a nonnegative function $v \in L^\infty(\Omega)$ and $\tau>0$, defined by $v_\tau(x)= \min \{v(x),\tau\}$. Moreover, we note that Schwarz symmetrization commutes with truncation, i.e. we have 
\begin{equation}
  \label{eq:truncation-property}
(v_\tau)^* = (v^*)_\tau \qquad \text{for nonnegative functions $v \in L^\infty(\Omega)$ and $\tau>0$.}  
\end{equation}
Next, we fix $\eps \in (0,\frac{1}{2})$, and define the functions $Y^\pm = (1\pm \eps)h \in L^\infty(\Omega)$.
Using the above notation for truncations, we then note that the functions
  $(Y^\pm)_\tau$ are continuous on $\overline \Omega$ for $0< \tau <\tau_0:= \frac{1}{2}\min \limits_{\partial \Omega}\psi$. Moreover, it follows by construction and the definition of $\psi$ that there exists $\rho \in (0,1)$ with
  \begin{equation}
    \label{eq:boundary-comparison-est}
  (Y^-)_{\tau}(x) \le u_{\tau}(x) \le (Y^+)_{\tau}(x) \qquad \text{for $\tau \in (0,\tau_0)$ and $\rho < |x|<1$.}    
  \end{equation}
    Moreover, we may choose $\tau_1 \in (0,\tau_0)$ with the property that
  $$
  Y^+ \ge Y^- \ge \tau_1\quad \text{and}\quad u \ge \tau_1 \quad \text{on $B_{\rho}(0)$ for $\eps \in (0,1)$.}
  $$
  It thus follows from (\ref{eq:boundary-comparison-est}) that 
  $$
  (Y^-)_\tau \le u_{\tau} \le (Y^+)_{\tau} \qquad \text{on $\Omega\quad$ for $\tau \in (0,\tau_1)$.}
$$
  and therefore also, by (\ref{eq:truncation-property}) and the order preserving property of Schwarz symmetrization, 
  \begin{equation}
    \label{eq:boundary-comparison-est-all}
  \bigl((Y^-)^*\bigr)_\tau \le (u^*)_{\tau} \le \bigl( (Y^+)^*\bigr)_\tau \qquad \text{on $\Omega\quad$ for $\tau \in (0,\tau_1)$.}
  \end{equation}
  Since
  $$
  \frac{\bigl((Y^\pm)^*\bigr)_\tau}{\delta^s} \equiv \frac{(Y^\pm)^*}{\delta^s}= (1 \pm \eps)\frac{h^*}{\delta^s} \qquad \text{on $\partial \Omega$,}
  $$
  it follows from (\ref{eq:h-star-prop}) and (\ref{eq:boundary-comparison-est-all}) that on $\partial \Omega$ we have 
  $$
(1-\eps)\Bigl(\frac{1}{\omega_{N-1}}\int_{S^{N-1}}\psi(\theta)^{-\frac{1}{s}}d\sigma(\theta)\Bigr)^{-s}\le   \frac{(u^*)_\tau}{\delta^s}= \frac{u^*}{\delta^s} \le (1+\eps)\Bigl(\frac{1}{\omega_{N-1}}\int_{S^{N-1}}\psi(\theta)^{-\frac{1}{s}}d\sigma(\theta)\Bigr)^{-s}
$$
The formula (\ref{boundary-fractional-deriv-symm-formula}) now follows by letting $\eps \to 0$ in this chain of inequalities.
\end{proof}

Now let $f\in L^\infty(\Omega)$ be a nonnegative and radial function. Then $u$ and $v$ are radial functions in $\Omega$. Moreover, $v$ is nonincreasing in the radial variable, so it is Schwarz symmetric. We start with the following lemma.

\begin{cor}\label{trunc-symmetry}
  Let $s >0$, and let $u: \Omega \to [0,\infty]$ be a {\em radial} measurable function with the properties that
  $$
  \frac{u}{\delta^s} \in C(\overline{\Omega \setminus B_r(0)}) \quad \text{for some $r \in (0,1)$}\qquad\; \text{and}\qquad\; \essinf_{\overline \Omega}\frac{u}{\delta^s}>0.
  $$
  Then we have
   \begin{equation}
     \label{eq:conincide-fractional-normal-deriv}
     \frac{u}{\delta^s} = \frac{u^*}{\delta^s} \qquad \text{on $\partial \Omega$.}
     \end{equation}    
     In particular, this holds if
     $u:= u_f \in \mathscr{H}^s_0(\Omega)$ is the solution of (\ref{fractional-Dirichlet-Poisson}) for some given 
nonnegative and radial function $f \in L^\infty(\Omega)$.
   \end{cor}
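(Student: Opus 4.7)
The plan is to directly invoke Proposition~\ref{sym-explizit-boundary-expr} and exploit the radial symmetry of $u$ to trivialize the right-hand side of (\ref{boundary-fractional-deriv-symm-formula}). Since $u$ is radial and $\frac{u}{\delta^s} \in C(\overline{\Omega \setminus B_r(0)})$, the function $\psi := \frac{u}{\delta^s}$ is constant on $\partial \Omega$, say $\psi \equiv c$ on $S^{N-1}$, where $c \ge \essinf_{\overline \Omega}\frac{u}{\delta^s}>0$. Applying Proposition~\ref{sym-explizit-boundary-expr}, we get on $\partial \Omega$
$$
\frac{u^*}{\delta^s}= \Bigl(\frac{1}{\omega_{N-1}}\int_{S^{N-1}} c^{-\frac{1}{s}}\,d\sigma(\vartheta)\Bigr)^{-s} = \bigl(c^{-\frac{1}{s}}\bigr)^{-s}= c = \frac{u}{\delta^s},
$$
which establishes (\ref{eq:conincide-fractional-normal-deriv}).

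For the "in particular" claim, let $f \in L^\infty(\Omega)$ be radial and nonnegative, and set $u := u_f$. If $u \equiv 0$, the identity (\ref{eq:conincide-fractional-normal-deriv}) is trivial. Otherwise, by the fractional Hopf lemma (Lemma~\ref{Hopf-lemma}), $\essinf_{\overline \Omega}\frac{u}{\delta^s}>0$. Moreover, the fractional boundary regularity theory of \cite{ros2014dirichlet} recalled in the introduction guarantees that $\frac{u}{\delta^s}$ extends to a $C^\alpha(\overline \Omega)$ function; in particular, the continuity hypothesis of Proposition~\ref{sym-explizit-boundary-expr} holds on all of $\overline \Omega$. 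Also, $u$ is radial as the convolution of a radial function with the radially symmetric Green kernel $G_s$. Hence all hypotheses of the first part apply, and (\ref{eq:conincide-fractional-normal-deriv}) follows.

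There is no real obstacle here; the corollary is essentially a one-line consequence of the harmonic-mean formula of Proposition~\ref{sym-explizit-boundary-expr} once radial symmetry reduces the spherical integral to a point evaluation. The only minor verification needed is that the solution $u_f$ of (\ref{fractional-Dirichlet-Poisson}) for a radial $L^\infty$ datum indeed satisfies the hypotheses, which is covered by the regularity and Hopf-type statements already recalled in Section~\ref{sec:prel-notat}.
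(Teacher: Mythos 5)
Your proof is correct and follows the same route as the paper: apply Proposition~\ref{sym-explizit-boundary-expr}, observe that radial symmetry makes $\psi=\frac{u}{\delta^s}$ constant on $S^{N-1}$ so the spherical harmonic mean collapses to that constant, and for the ``in particular'' part invoke boundary regularity from \cite{ros2014dirichlet} together with the fractional Hopf lemma. Your explicit handling of the trivial case $u\equiv 0$ (which the paper glosses over) is a minor but welcome refinement, not a different approach.
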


\begin{proof}
The equality (\ref{eq:conincide-fractional-normal-deriv}) follows immediately from (\ref{boundary-fractional-deriv-symm-formula}) in the case where $u$ is radial. Moreover, if $u:= u_f \in \mathscr{H}^s_0(\Omega)$ is the solution of (\ref{fractional-Dirichlet-Poisson}) for some given nonnegative and radial function $f \in L^\infty(\Omega)$, then $\frac{u}{\delta^s} \in C(\overline \Omega)$ by fractional elliptic regularity (see \cite{ros2014dirichlet}), and $\inf_{\overline \Omega}\frac{u}{\delta^s}>0$ by Lemma~\ref{Hopf-lemma}.
\end{proof}

\begin{rem}
  \label{rem-boundary-equality}
While (\ref{eq:conincide-fractional-normal-deriv}) immediately follows from (\ref{boundary-fractional-deriv-symm-formula}) in the case where $u$ is radial, it can also be proved directly in this case by showing that $u = u^*$ in a neighborhood of $\partial \Omega$.  
\end{rem}

\section{The reverse  boundary Talenti inequality for radial functions}
\label{sec:reverse-bound-talent}

In this section we complete the proof of Theorem~\ref{first-main-theorem}. Throughout this section, we let $\Omega = B_1(0)$. Moreover, we let $f \in L^\infty(\Omega)$ be a given radial and nonnegative function, and as before we let $u_f \in \mathscr{H}^s_0(\Omega)$ and $u_{f^*} \in \mathscr{H}^s_0(\Omega)$ be the corresponding solutions of (\ref{fractional-Dirichlet-Poisson}) and (\ref{fractional-Dirichlet-Poisson-symmetrized}). By Lemma~\ref{sec:further-results-1-0}, we have
$$
    \int_{\partial \Omega}\Bigl(\frac{u_f}{\delta^s}-\frac{u_{f^*}}{\delta^s}\Bigr)d \sigma = \frac{2 \kappa_{N,s} \omega_{N-1}}{s}\int_{\Omega}(1-|y|^2)^{s-1}(f(y)-f^*(y))dy.
$$
Setting 
\[
r \mapsto \mathbf{k}_{N,s}(r):=\frac{2 \kappa_{N,s} \omega_{N-1}}{s}(1-r^2)^{s-1} \qquad \text{for $r \in (0,1)$,}
\]
we may use the Layer-Cake-Representations of the functions $f$ and $f^*$ to see that
  \begin{align}
    \int_{\partial \Omega}\Bigl(\frac{u_f}{\delta^s}-\frac{u_{f^*}}{\delta^s}\Bigr)d \sigma \nonumber &= \int_{\Omega}\mathbf{k}_{N,s}(|y|) (f(y)-f^*(y))dy\\
&=\int_0^\infty\int_{B_1(0)} \mathbf{k}_{N,s}(|y|)(1_{\{f>t\}}(y)-1_{\{f^*>t\}}(y))dydt \nonumber\\
&= \int_0^\infty\Bigl(\int_{\Omega_t\setminus \Omega_t^*} \mathbf{k}_{N,s}(|y|)dy-\int_{\Omega_t^*\setminus \Omega_t} \mathbf{k}_{N,s}(|y|)dy\Bigr)dt. \label{proof-first-main-thm-difference}
  \end{align}
  Here we have put $\Omega_t = \{f> t\}$, so that $\Omega_t^* = \{f^*>t\}$, and we have $\Omega_t^*= B_{r(t)}(0)$ for some $r(t) \in (0,1]$ if $|\Omega_t|= |\Omega_t^*|>0$. We also note that  
  $$
  |\Omega_t\setminus \Omega_t^*| = |\Omega_t^*\setminus \Omega_t| \qquad \text{for $t>0$.}
  $$
  We then distinguish two cases. If $|\Omega_t\setminus \Omega_t^*|= 0$ for a.e. $t \in (0,\infty)$, then
  $$
  \int_{\partial \Omega}\frac{u_{f^*}}{\delta^s}d \sigma =  \int_{\partial \Omega}\frac{u_f}{\delta^s}d \sigma =
  \int_{\partial \Omega}\frac{u^*}{\delta^s}d \sigma
  $$
  by (\ref{proof-first-main-thm-difference}) and Lemma~\ref{trunc-symmetry}. If, on the other hand,
the set $T:= \{t \in (0,\infty)\::\:   |\Omega_t\setminus \Omega_t^*|>0\}$ has positive measure, we can use the fact that the function $\mathbf{k}_{N,s}(r)$ is strictly increasing to see that  
$$
\int_{\Omega_t\setminus \Omega_t^*} \mathbf{k}_{N,s}(|y|)dy-\int_{\Omega_t^*\setminus \Omega_t} \mathbf{k}_{N,s}(|y|)dy>\bigl(|\Omega_t\setminus\Omega_t^*|-|\Omega_t^*\setminus\Omega_t|\bigr) \mathbf{k}_{N,s}(r(t))=0 \qquad \text{for $t \in T$}
$$
and therefore 
  $$
  \int_{\partial \Omega}\frac{(u_f)^*}{\delta^s}d \sigma = \int_{\partial \Omega}\frac{u_f}{\delta^s}d \sigma > \int_{\partial \Omega}\frac{u_{f^*}}{\delta^s}d \sigma 
  $$
  again by (\ref{proof-first-main-thm-difference}) and Lemma~\ref{trunc-symmetry}. So (\ref{eq:fractional-boundary-pointwise-inequality}) holds, and we have equality in (\ref{eq:fractional-boundary-pointwise-inequality}) if and only if $|\Omega_t\setminus \Omega_t^*|=0$ for a.e. $t \in (0,\infty)$, which in turn is true if and only $f$ equals $f^*$ a.e. in $\Omega$. The proof of Theorem~\ref{first-main-theorem}(i) is thus finished, and Part (ii) is a direct consequence of Part (i).

\section{The boundary Talenti inequality for a class of nonradial functions}
\label{sec:bound-talenti-ineq}

In this section, we complete the proof of Theorem~\ref{second-main-theorem}. As before, we restrict our attention to the case $\Omega = B_1(0)$. We start with the following simple observation.

\begin{lemma}
  \label{sec:further-results-1}
  For every nonnegative radial function $f \in L^\infty(\Omega) \setminus \{0\}$, the unique solution $u = u_f$ of (\ref{fractional-Dirichlet-Poisson}) satisfies
  \begin{equation}
    \label{eq:general-lowe-bound-radial}
  \frac{u_f}{\delta^s} > \frac{2 \kappa_{N,s}}{s} \|f\|_{L^1(\Omega)} \qquad \text{on $\partial \Omega$,}  
\end{equation}
where the constant $\frac{2 \kappa_{N,s}}{s}$ in this lower bound is sharp. 
\end{lemma}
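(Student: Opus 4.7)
The plan is to apply directly the formula from Lemma~\ref{sec:further-results-1-0}, which gives the exact boundary value
$$
\frac{u_f}{\delta^s} \equiv \frac{2\kappa_{N,s}}{s}\int_{\Omega} f(y)(1-|y|^2)^{s-1}dy \qquad \text{on $\partial \Omega$},
$$
and then compare the weight $(1-|y|^2)^{s-1}$ with the constant $1$. Since $s \in (0,1)$, the exponent $s-1$ is strictly negative, so $(1-|y|^2)^{s-1} > 1$ for every $y \in \Omega \setminus \{0\}$. Combining this with the hypothesis $f \ge 0$, $f \not \equiv 0$, and the fact that $\{0\}$ has Lebesgue measure zero, the strict integral inequality
$$
\int_{\Omega} f(y)(1-|y|^2)^{s-1}dy > \int_{\Omega} f(y)\,dy = \|f\|_{L^1(\Omega)}
$$
follows immediately, yielding (\ref{eq:general-lowe-bound-radial}).

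For the sharpness of the constant $\frac{2\kappa_{N,s}}{s}$, the strategy is to construct a sequence of admissible test functions whose mass is concentrated near the origin, since the pointwise excess $(1-|y|^2)^{s-1} - 1$ vanishes as $|y| \to 0$. Concretely, I would take $f_n(y) := \frac{N}{\omega_{N-1}} n^N \mathbf{1}_{B_{1/n}(0)}(y)$, which is radial, nonnegative, belongs to $L^\infty(\Omega)$ (for each fixed $n$), and satisfies $\|f_n\|_{L^1(\Omega)} = 1$. By dominated convergence (uniform in $n$ since the weight is bounded on any fixed ball $B_{1/2}(0)$), one has
$$
\int_{\Omega} f_n(y)(1-|y|^2)^{s-1}dy \longrightarrow 1 \qquad \text{as $n \to \infty$,}
$$
so the quotient of the left-hand side of (\ref{eq:general-lowe-bound-radial}) by $\|f_n\|_{L^1(\Omega)}$ converges to $\frac{2\kappa_{N,s}}{s}$. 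Thus no constant strictly larger than $\frac{2\kappa_{N,s}}{s}$ can be inserted in (\ref{eq:general-lowe-bound-radial}) uniformly in $f$.

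The proof is essentially a one-line consequence of the preceding identity, so there is no real obstacle; the only thing to verify carefully is the sign of $s-1$ (which forces $(1-|y|^2)^{s-1} \ge 1$ with equality only at $y=0$) and the null-measure argument that upgrades the pointwise strict inequality away from the origin to a strict integral inequality under the assumption $f \not \equiv 0$.
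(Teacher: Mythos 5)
Your proof is correct and follows essentially the same path as the paper's: both invoke the boundary identity from Lemma~\ref{sec:further-results-1-0}, observe that $(1-|y|^2)^{s-1}>1$ off a null set because $s-1<0$, and establish sharpness by mass concentration at the origin with the normalized indicator functions $\frac{1}{|B_\eps(0)|}\mathbf{1}_{B_\eps(0)}$ (your $f_n$ is the same family with $\eps = 1/n$). The only cosmetic difference is that the paper evaluates the limit directly via the Martin kernel $M_s(y,\vartheta) \to M_s(0,\vartheta)$, while you pass through the radial weight $(1-|y|^2)^{s-1}$ and cite continuity at $0$; these are equivalent in light of Lemma~\ref{sec:further-results-1-0}.
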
  

\begin{proof}
By Lemma~\ref{sec:further-results-1-0} we have 
  \begin{align*}
    \frac{u_f}{\delta^s} \equiv  \frac{2 \kappa_{N,s}}{s} \int_{\Omega} f(y)(1-|y|^2)^{s-1}dy \qquad \text{on $\partial \Omega$.}
  \end{align*}
  Since $s < 1$, it follows that $(1-|y|^2)^{s-1} > 1$ for $y \in \Omega \setminus \{0\}$, and hence (\ref{eq:general-lowe-bound-radial}) follows. The sharpness of the constant $\frac{2 \kappa_{N,s}}{s}$ follows by considering the functions
  $$
  f_\eps := \frac{1}{|B_{\eps}(0)|} 1_{B_{\eps}(0)},
  $$
  and by noting that $\|f_\eps\|_{L^1(\Omega)} = 1$ for all $\eps \in (0,1)$, while for $\vartheta \in \partial \Omega$ we have 
$$
    \frac{u_{f_\eps}}{\delta^s}(\vartheta) = \frac{1}{|B_{\eps}(0)|}  \int_{B_\eps(0)} M_s(y,\vartheta)dy \to M_s(0,\vartheta)=
    \frac{2 \kappa_{N,s}}{s} \quad \text{as $\eps \to 0$.}
 $$
\end{proof}

Next, we consider the family of auxiliary functions
\begin{equation}
  \label{eq:def-T-tau}
T_{N,\tau}: \Omega \to \R, \qquad T_{N,\tau}(\xi) = \frac{1}{\omega_{N-1}}\int_{S^{N-1}}|\vartheta-\xi|^\tau d\sigma(\vartheta).
\end{equation}

We note that $T_{N,\tau}$ is radial function which can be written as a hypergeometric function in the argument $|\xi|$ for $N \ge 2$, see e.g. \cite[Section 2]{ferone2021symmetrization}, but we don't need this fact in the following. We only need the following simple estimate.

\begin{lemma}
  \label{sec:bound-talenti-ineq-1}
For $\tau \ge 1$ and $\xi \in \Omega \setminus \{0\}$, we have $T_{N,\tau}(\xi) > T_{N,\tau}(0)=1$.  
\end{lemma}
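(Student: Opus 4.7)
The plan is to combine the rotational symmetry of $T_{N,\tau}$ with the convexity of $t\mapsto t^\tau$ on $[0,\infty)$. First I would note that $T_{N,\tau}(0) = 1$ by the definition of $\omega_{N-1}$, and that $T_{N,\tau}$ is invariant under the action of $O(N)$ on its argument (via the change of variables $\vartheta\mapsto R\vartheta$ on $S^{N-1}$); in particular $T_{N,\tau}(-\xi) = T_{N,\tau}(\xi)$.

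Next, applying the substitution $\vartheta\mapsto -\vartheta$ under the integral and averaging with the original, I rewrite
\[
T_{N,\tau}(\xi) \;=\; \frac{1}{2\omega_{N-1}}\int_{S^{N-1}}\bigl(|\vartheta-\xi|^\tau + |\vartheta+\xi|^\tau\bigr)\,d\sigma(\vartheta).
\]
For each fixed $\vartheta$, convexity of $t\mapsto t^\tau$ on $[0,\infty)$ (which holds since $\tau\ge 1$) gives
\[
\tfrac{1}{2}\bigl(|\vartheta-\xi|^\tau + |\vartheta+\xi|^\tau\bigr) \;\ge\; \Bigl(\tfrac{1}{2}\bigl(|\vartheta-\xi|+|\vartheta+\xi|\bigr)\Bigr)^{\!\tau},
\]
and the triangle inequality yields $|\vartheta-\xi|+|\vartheta+\xi|\ge |(\vartheta-\xi)+(\vartheta+\xi)|=2|\vartheta|=2$. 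Chaining these two pointwise estimates and integrating over $S^{N-1}$ immediately delivers the non-strict bound $T_{N,\tau}(\xi)\ge 1$.

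The step that needs care is the upgrade to strict inequality when $\xi\neq 0$. I would argue as follows: equality in the triangle inequality $|a|+|b|\ge |a+b|$ forces the vectors $a,b$ to be nonnegatively collinear, so for $a=\vartheta-\xi$ and $b=\vartheta+\xi$ equality holds only when $\vartheta$ lies on the line $\R\xi$ through the origin; for $\xi\ne 0$ and $N\ge 2$ this is a $\sigma$-null subset of $S^{N-1}$, so the pointwise inequality is strict on a set of positive measure and the integrated bound becomes strict. As an alternative applicable to any $N$ whenever $\tau>1$, one may instead invoke strict convexity of $\xi\mapsto |\vartheta-\xi|^\tau$, which propagates to $T_{N,\tau}$, and then the reflection symmetry gives
\[
T_{N,\tau}(0) \;=\; T_{N,\tau}\bigl(\tfrac{1}{2}\xi+\tfrac{1}{2}(-\xi)\bigr) \;<\; \tfrac{1}{2}T_{N,\tau}(\xi)+\tfrac{1}{2}T_{N,\tau}(-\xi) \;=\; T_{N,\tau}(\xi).
\]
The only mildly delicate point is the passage from $\ge$ to $>$ in the borderline case $\tau=1$, and the triangle-inequality pairing handles it cleanly once one identifies the equality locus as measure-zero on $S^{N-1}$.
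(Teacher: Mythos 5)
Your argument is correct and follows essentially the same route as the paper: average over the reflection $\vartheta\mapsto -\vartheta$, apply convexity of $t\mapsto t^\tau$ together with the triangle inequality to get the pointwise bound, and obtain strictness because equality in the triangle inequality holds only at $\vartheta=\pm\xi/|\xi|$, a $\sigma$-null set. You are in fact slightly more explicit than the paper in separating the $\tau=1$ (needs $N\ge 2$) and $\tau>1$ (strict convexity works for any $N$) cases, which is a worthwhile clarification since for $N=1$ and $\tau=1$ one checks directly that $T_{1,1}(\xi)=1$ and the strict inequality genuinely fails; this borderline combination never occurs in the paper's applications, where $\tau=N/s$ with either $s<1$ (so $\tau>N\ge1$) or $s\in(1,N]$ forcing $N\ge 2$.
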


\begin{proof}
Let $\xi \in \Omega \setminus \{0\}$. For $\vartheta \in \partial \Omega$, we then have 
\begin{align}
  1 = |\vartheta|^\tau = \bigl|\frac{1}{2}(\vartheta-\xi) + \frac{1}{2}(\vartheta+\xi)\bigr|^\tau &\le
\bigl( \frac{1}{2}|\vartheta-\xi| + \frac{1}{2}|\vartheta+\xi|\bigr)^\tau \label{strict-inequality-convexity}\\
  &\le      \frac{1}{2}|\vartheta-\xi|^\tau + \frac{1}{2}|\vartheta+ \xi|^\tau \nonumber
\end{align}
  by the convexity of the function $|\cdot|^\tau: \R^N \to \R$, which holds for $\tau \ge 1$. Moreover, the triangle inequality in (\ref{strict-inequality-convexity}) is strict for $\vartheta \in \partial \Omega \setminus \{\pm \frac{\xi}{|\xi|}\}$. Consequently,
  $$
  T_{N,\tau}(0)= 1 < \frac{1}{\omega_{N-1}}\int_{S^{N-1}}\Bigl(\frac{1}{2}|\vartheta-\xi|^\tau + \frac{1}{2}|\vartheta+ \xi|^\tau\Bigr)d \sigma(\vartheta) = \frac{1}{\omega_{N-1}}\int_{S^{N-1}}|\vartheta-\xi|^\tau d\sigma(\vartheta)= T_{N,\tau}(\xi),
  $$
  where we used a change of variables in the last step. The claim thus follows.
\end{proof}  

\begin{prop}
\label{sym-explizit-boundary-expr-green}
Let $s \in (0,1)$ and $\xi \in \Omega$. Then the Schwarz symmetrization of the Martin kernel $M_s(\xi,\cdot )$ satisfies 
\begin{align*}
M_s(\xi,\cdot )^* \equiv \Bigg(\frac{1}{\omega_{N-1}}\int_{S^{N-1}}\Big(M_s(\xi,\vartheta)\Big)^{-\frac{1}{s}}d\sigma(\vartheta)\Bigg)^{-s} = \frac{2\kappa_{N,s}}{s}(1-|\xi|^2)^{s}\Bigl(T_{N,\frac{N}{s}}(\xi) \Bigr)^{-s}    \qquad \text{on $\partial \Omega$}
\end{align*}
with the function $T_{N,\frac{N}{s}}$ defined in (\ref{eq:def-T-tau}). Moreover, we have
\begin{align}
  \label{sym-explizit-boundary-expr-green-formula-ineq}
  M_s(\xi,\cdot)^* <  \frac{2\kappa_{N,s}}{s}(1-|\xi|^2)^{s} = M_s(0,\cdot) \qquad \text{on $\partial \Omega$ for $\xi \in \Omega \setminus \{0\}$.}
\end{align}
\end{prop}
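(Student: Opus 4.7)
The plan is to prove the proposition in three steps: first identify the symmetrization formula via Proposition~\ref{sym-explizit-boundary-expr}; second, plug in the explicit Martin kernel (\ref{explicit-martin-ball}) and recognize the resulting integral as $T_{N,N/s}(\xi)$; third, bound this quantity from below using Lemma~\ref{sec:bound-talenti-ineq-1} to conclude the strict upper bound.

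For the first step, the notation $M_s(\xi,\cdot)^*$ should be read as the constant boundary value $(G_s(\xi,\cdot))^*/\delta^s$ on $\partial \Omega$, since $(G_s(\xi,\cdot))^*$ is radial. To apply Proposition~\ref{sym-explizit-boundary-expr} to $u := G_s(\xi,\cdot)$, I would pick any $r \in (|\xi|,1)$: on $\Omega \setminus B_r(0)$ the only interior singularity $\{\xi\}$ is absorbed into $B_r(0)$, and by fractional boundary regularity the ratio $G_s(\xi,\cdot)/\delta^s$ extends continuously to $\overline{\Omega \setminus B_r(0)}$ with positive boundary trace $M_s(\xi,\cdot)$. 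The hypothesis $\essinf_{\overline \Omega} G_s(\xi,\cdot)/\delta^s > 0$ follows from the positivity of $M_s(\xi,\cdot)$ on $\partial \Omega$, interior positivity of $G_s(\xi,\cdot)$, and the blow-up at $\xi$. Proposition~\ref{sym-explizit-boundary-expr} then delivers the first equality.

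For the second step, I would substitute (\ref{explicit-martin-ball}) to obtain
\[
M_s(\xi,\vartheta)^{-1/s} = \Bigl(\frac{2\kappa_{N,s}}{s}\Bigr)^{-1/s}\frac{|\vartheta - \xi|^{N/s}}{1-|\xi|^2},
\]
integrate over $S^{N-1}$ with the normalization $\omega_{N-1}^{-1}$, and recognize the result as $(2\kappa_{N,s}/s)^{-1/s}\, T_{N,N/s}(\xi)/(1-|\xi|^2)$ using definition (\ref{eq:def-T-tau}); raising to the $-s$ power yields precisely $\frac{2\kappa_{N,s}}{s}(1-|\xi|^2)^{s} T_{N,N/s}(\xi)^{-s}$, the claimed identity.

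For the third step, since $N \ge 1$ and $s \in (0,1)$ we have $N/s > 1$, so Lemma~\ref{sec:bound-talenti-ineq-1} gives $T_{N,N/s}(\xi) > 1$ for $\xi \in \Omega \setminus \{0\}$, and hence $T_{N,N/s}(\xi)^{-s} < 1$. Combined with $(1-|\xi|^2)^s < 1$, this yields
\[
M_s(\xi,\cdot)^* \;=\; \frac{2\kappa_{N,s}}{s}(1-|\xi|^2)^s\, T_{N,N/s}(\xi)^{-s} \;<\; \frac{2\kappa_{N,s}}{s} \;=\; M_s(0,\cdot),
\]
which is the strict inequality (\ref{sym-explizit-boundary-expr-green-formula-ineq}). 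I do not expect any deep obstacle; the only subtle point is justifying the use of Proposition~\ref{sym-explizit-boundary-expr} for a Green function with an interior singularity, which is handled cleanly by choosing the excluded ball $B_r(0)$ large enough to contain $\xi$.
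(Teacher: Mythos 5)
Your proposal is correct and follows the paper's proof exactly: apply Proposition~\ref{sym-explizit-boundary-expr} to $u = G_s(\cdot,\xi)$, substitute the explicit Martin kernel formula (\ref{explicit-martin-ball}) to identify the spherical integral as $T_{N,\frac{N}{s}}(\xi)$, and conclude the strict inequality via Lemma~\ref{sec:bound-talenti-ineq-1}. Your added clarification that the excluded ball in Proposition~\ref{sym-explizit-boundary-expr} should be taken with $r>|\xi|$ to absorb the interior singularity of $G_s(\cdot,\xi)$ is a valid expansion of a step the paper leaves implicit.
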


\begin{proof}
Let $\xi \in \Omega$. By (\ref{explicit-martin-ball}) and Proposition~\ref{sym-explizit-boundary-expr} applied to $u = G_s(\cdot,\xi)$, we have, on $\partial \Omega$,    
\begin{align*}
  M_s(\xi,\cdot)^* &\equiv \Bigg(\frac{1}{\omega_{N-1}}\int_{S^{N-1}} \Bigl(\frac{G_s(\vartheta,\xi)}{\delta^s(\vartheta)}\Bigr)^{-\frac{1}{s}}d\sigma(\vartheta)\Bigg)^{-s}\\
&= \Bigg(\frac{1}{\omega_{N-1}}\int_{S^{N-1}}   \Big(M_s(\xi,\vartheta)\Big)^{-\frac{1}{s}}d\sigma(\vartheta)\Bigg)^{-s}\\
                                    &= \frac{2\kappa_{N,s}}{s}(1-|\xi|^2)^{s}\Bigg(\frac{1}{\omega_{N-1}}\int_{S^{N-1}}|\xi-\vartheta|^{\frac{N}{s}} d\sigma(\vartheta)\Bigg)^{-s} = \frac{2\kappa_{N,s}}{s}(1-|\xi|^2)^{s}\Bigl(T_{N,\frac{N}{s}}(\xi) \Bigr)^{-s} 
\end{align*}
with the function $T_{N,\frac{N}{s}}$ defined in (\ref{eq:def-T-tau}), as claimed. Moreover, (\ref{sym-explizit-boundary-expr-green-formula-ineq}) follows directly from Lemma~\ref{sec:bound-talenti-ineq-1}.
\end{proof}

\begin{lemma}
\label{support-computation}  
Let $\xi \in \Omega \setminus \{0\}$ and $0 < \rho <\min \{|\xi|, 1-|\xi|\}$. Then for every nonnegative function $f \in L^\infty(\Omega)$ supported in $B_{\rho}(\xi)$ we have
  \begin{equation}
    \label{eq:upper-bound-nonradial}
  \frac{(u_f)^*}{\delta^s} \le \frac{2 \kappa_{N,s}}{s} \frac{\bigl(1-(|\xi|-\rho)^2\bigr)^s}{\bigl(1- \frac{\rho}{1-|\xi|}\bigr)^N} \|f\|_{L^1(\Omega)} \qquad \text{on $\partial \Omega$,}  
\end{equation}
\end{lemma}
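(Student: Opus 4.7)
The plan is to combine the Martin-kernel representation (\ref{eq:martin-kernel-boundary-formula}) of $u_f/\delta^s$ on $\partial\Omega$ with the $s$-harmonic mean formula in Proposition~\ref{sym-explizit-boundary-expr}, reducing the estimate of $(u_f)^*/\delta^s$ to a lower bound on one spherical integral, which will then be handled with Lemma~\ref{sec:bound-talenti-ineq-1}. The case $f\equiv 0$ being trivial, I assume $f\not\equiv 0$; the hypotheses of Proposition~\ref{sym-explizit-boundary-expr} hold for $u:=u_f$ by fractional boundary regularity (\cite{ros2014dirichlet}) and Lemma~\ref{Hopf-lemma}.

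First I would use the support assumption to produce a pointwise majorant of $u_f/\delta^s$ on $\partial\Omega$. Since $\rho<\min\{|\xi|,1-|\xi|\}$, for $y\in B_\rho(\xi)$ and $\vartheta\in\partial\Omega$ one has $1-|y|^2\le 1-(|\xi|-\rho)^2$ and $|\vartheta-y|\ge |\vartheta-\xi|-\rho\ge 1-|\xi|-\rho>0$. Inserting this into (\ref{explicit-martin-ball}) and integrating against $f$ yields
\[
\frac{u_f(\vartheta)}{\delta^s(\vartheta)} \le U(\vartheta) := \frac{2\kappa_{N,s}(1-(|\xi|-\rho)^2)^s}{s(|\vartheta-\xi|-\rho)^N}\,\|f\|_{L^1(\Omega)} \qquad\text{for every }\vartheta\in\partial\Omega.
\]
Since $t\mapsto t^{-1/s}$ and $t\mapsto t^{-s}$ reverse inequalities, Proposition~\ref{sym-explizit-boundary-expr} then gives
\[
\frac{(u_f)^*}{\delta^s} \le \Bigl(\frac{1}{\omega_{N-1}}\int_{S^{N-1}}U(\vartheta)^{-1/s}\,d\sigma(\vartheta)\Bigr)^{-s} = \frac{2\kappa_{N,s}(1-(|\xi|-\rho)^2)^s}{s}\,\|f\|_{L^1(\Omega)}\cdot J(\xi)^{-s},
\]
where $J(\xi):=\frac{1}{\omega_{N-1}}\int_{S^{N-1}}(|\vartheta-\xi|-\rho)^{N/s}\,d\sigma(\vartheta)$. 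It therefore suffices to prove $J(\xi)\ge (1-\rho/(1-|\xi|))^{N/s}$.

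The decisive step is the scaling inequality
\[
|\vartheta-\xi|-\rho \ge |\vartheta-\xi|\Bigl(1-\tfrac{\rho}{1-|\xi|}\Bigr) \qquad\text{for every }\vartheta\in\partial\Omega,
\]
which is equivalent to the obvious lower bound $|\vartheta-\xi|\ge 1-|\xi|$. Raising this to the power $N/s\ge 1$ and integrating over $S^{N-1}$ reduces the desired bound on $J(\xi)$ to $T_{N,N/s}(\xi)\ge 1$, which is exactly Lemma~\ref{sec:bound-talenti-ineq-1}; inserting everything produces (\ref{eq:upper-bound-nonradial}). The main obstacle, in my view, is locating this scaling inequality: the naive estimate $|\vartheta-\xi|-\rho\ge 1-|\xi|-\rho$ only yields $(1-|\xi|-\rho)^{-N}$ in place of $(1-\rho/(1-|\xi|))^{-N}$, losing a factor $(1-|\xi|)^N$. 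It is precisely this factor that is recovered by exploiting $T_{N,N/s}(\xi)\ge 1$, and this is what makes (\ref{eq:upper-bound-nonradial}) strong enough to match the hypothesis (\ref{eq:rho-condition-intro}) of Theorem~\ref{second-main-theorem}.
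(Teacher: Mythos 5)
Your proof is correct and follows essentially the same route as the paper's. You use Proposition~\ref{sym-explizit-boundary-expr} to reduce the estimate to a lower bound on a spherical integral, bound the Martin kernel by $1-|y|^2\le 1-(|\xi|-\rho)^2$ and $|\vartheta-y|\ge|\vartheta-\xi|-\rho$, and then deploy the same key scaling inequality $|\vartheta-\xi|-\rho\ge(1-\rho/(1-|\xi|))|\vartheta-\xi|$ (equivalent to $|\vartheta-\xi|\ge 1-|\xi|$) together with $T_{N,N/s}(\xi)\ge 1$ from Lemma~\ref{sec:bound-talenti-ineq-1}; the paper does precisely this, just carrying the estimates directly inside the nested integral rather than first isolating a pointwise majorant $U(\vartheta)$.
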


\begin{proof}
To simplify the notation, we put again $u:= u_f$. On $\partial \Omega$, we have, by Lemma~\ref{sym-explizit-boundary-expr}, 
  \begin{align}
    \frac{u^*}{\delta^s} &\equiv \Bigg(\frac{1}{\omega_{N-1}}\int_{S^{N-1}} \Bigl(\frac{u}{\delta^s}(\vartheta)\Bigr)^{-\frac{1}{s}} d\sigma(\vartheta)\Bigg)^{-s}\nonumber \\
&= \Bigg(\frac{1}{\omega_{N-1}}\int_{S^{N-1}}\Bigl( \int_{B_{\rho(\xi)}}M_s(y,\vartheta)f(y)dy\Bigr)^{-\frac{1}{s}} d\sigma(\vartheta)\Bigg)^{-s} \nonumber \\
&= \frac{2 \kappa_{N,s}}{s}\Bigg(\frac{1}{\omega_{N-1}}\int_{S^{N-1}}\Bigl( \int_{B_{\rho(\xi)}}\frac{(1-|y|^2)^s}{|y-\vartheta|^N} f(y)dy\Bigr)^{-\frac{1}{s}} d\sigma(\vartheta)\Bigg)^{-s}.\label{support-computation-1}
  \end{align}
Since $0 < \rho <\min \{|\xi|, 1-|\xi|\}$ by assumption, we note that for $y \in B_\rho(\xi)$, $\vartheta \in \partial \Omega$ we have 
$$
(1-|y|^2)^s \le \bigl(1-(|\xi|-\rho)^2\bigr)^s
$$
and also, since $|\xi - \vartheta| \ge 1 - |\xi|$, 
$$
|y-\vartheta|^N \ge \Bigl(|\xi - \vartheta|- \rho \Bigr)^N \ge \Bigl(|\xi - \vartheta|- \frac{\rho}{1-|\xi|}|\xi - \vartheta| \Bigr)^N = \Bigl(1- \frac{\rho}{1-|\xi|}\Bigr)^N |\xi-\vartheta|^N.
$$
As a consequence, 
$$  
\int_{B_{\rho(\xi)}}\frac{(1-|y|^2)^s}{|y-\vartheta|^N}f(y)dy \le \frac{\bigl(1-(|\xi|-\rho)^2\bigr)^s}{\bigl(1- \frac{\rho}{1-|\xi|}\bigr)^N} \|f\|_{L^1(\Omega)} \frac{1}{|\xi-\vartheta|^N} \qquad \text{for $\vartheta \in \partial \Omega$}
$$
and therefore 
\begin{align*}
  & \frac{1}{\omega_{N-1}} \int_{S^{N-1}} \Bigl( \int_{B_{\rho(\xi)}}\frac{(1-|y|^2)^s}{|y-\vartheta|^N} f(y)dy\Bigr)^{-\frac{1}{s}}d \sigma(\vartheta)\\
  &\ge \Bigg(\|f\|_{L^1(\Omega)} \frac{\bigl(1-(|\xi|-\rho)^2\bigr)^s}{\bigl(1- \frac{\rho}{1-|\xi|}\bigr)^N} \Bigg)^{-\frac{1}{s}}   \frac{1}{\omega_{N-1}} \int_{S^{N-1}}|\xi-\vartheta|^{\frac{N}{s}}d \sigma(\vartheta)\\
  &= \Bigg(\|f\|_{L^1(\Omega)}  \frac{\bigl(1-(|\xi|-\rho)^2\bigr)^s}{\bigl(1- \frac{\rho}{1-|\xi|}\bigr)^N}\Bigg)^{-\frac{1}{s}} T_{N,\frac{N}{s}}(\xi)
     \end{align*}
with the function $T_{N,\frac{N}{s}}$ defined in (\ref{eq:def-T-tau}). Using this inequality in (\ref{support-computation-1}) together with Lemma~\ref{sec:bound-talenti-ineq-1}, we thus get that 
$$
    \frac{u^*}{\delta^s} \le \frac{2 \kappa_{N,s}}{s}\frac{\bigl(1-(|\xi|-\rho)^2\bigr)^s}{\bigl(1- \frac{\rho}{1-|\xi|}\bigr)^N} \|f\|_{L^1(\Omega)}\Bigl ( T_{N,\frac{N}{s}}(\xi) \Bigr)^{-s} \le \frac{2 \kappa_{N,s}}{s}\frac{\bigl(1-(|\xi|-\rho)^2\bigr)^s}{\bigl(1- \frac{\rho}{1-|\xi|}\bigr)^N} \|f\|_{L^1(\Omega)}
$$
on $\Omega$, as claimed.
\end{proof}

We may now complete the

\begin{proof}[Proof of Theorem~\ref{second-main-theorem}]
  Let $\xi \in \Omega \setminus \{0\}$, let $0 < \rho <\min \{|\xi|, 1-|\xi|\}$ satisfy (\ref{eq:rho-condition-intro}), and let $f \in L^\infty(\Omega) \setminus \{0\}$ be a nonnegative function supported in $B_{\rho}(\xi)$. Then Lemma~\ref{support-computation} implies that
  $$
  \frac{(u_f)^*}{\delta^s} \le \frac{2 \kappa_{N,s}}{s} \frac{\bigl(1-(|\xi|-\rho)^2\bigr)^s}{\bigl(1- \frac{\rho}{1-|\xi|}\bigr)^N} \|f\|_{L^1(\Omega)} \le \frac{2 \kappa_{N,s}}{s}\|f\|_{L^1(\Omega)} \qquad \text{on $\partial \Omega$,}
  $$
  while Lemma~\ref{sec:further-results-1} gives that 
  $$
  \frac{u_{f^*}}{\delta^s} > \frac{2 \kappa_{N,s}}{s}\|f^*\|_{L^1(\Omega)} =\frac{2 \kappa_{N,s}}{s}\|f\|_{L^1(\Omega)}.
  $$
  Combining theses inequalities yields the boundary Talenti inequality (\ref{eq:fractional-boundary-pointwise-talenti-inequality}) with $\le$ replaced by $<$, as claimed.
\end{proof}

\section{The Case $s>1$}\label{sec: Talenti boundary ineq s>1}
In this section, we investigate the validity of our previous results for the case $s>1$. We recall that, for $s=m+\sigma>1$ with $m\in\mathbb{N},\sigma\in(0,1)$, the operator $(-\Delta)^s$ can be defined via finite differences (see e.g. \cite{abatangelo-jarohs-saldana-2018}), namely,
\begin{equation}
    (-\Delta)^su(x) := \frac{c_{N,s}}{s}\int_{\mathbb{R}^N}\frac{\delta_{m+1}u(x,y)}{|y|^{N+2s}}dy, \qquad x\in\mathbb{R}^N
\end{equation}
where
\begin{equation}
    \delta_{m+1}u(x,y) := \sum_{-m-1}^{m+1}(-1)^k\binom{2(m+1)}{m+1-k}u(x+ky), \qquad \text{for} \ x,y\in\mathbb{R}^N
\end{equation}
is a finite difference of order $2(m+1)$ and $c_{N,s}$ is given as in (\ref{eq:pointwise-def-fractional-laplace}). In the following, we consider the problem (\ref{fractional-Dirichlet-Poisson}) with given $f \in L^\infty(\Omega)$, and we note that the definition of weak solutions $u \in \mathscr{H}^s_0(\Omega)$ via the property (\ref{eq:defining-weak-sol}) carries over to the case $s>1$ without any change. Here, again, $\mathscr{H}^s_0(\Omega)$ is defined as the set of functions $u \in H^s(\R^N)$ with $u \equiv 0$ on $\R^N \setminus \Omega$.
One of the main issues in the case $s>1$ is the general lack of maximum principles for higher-order fractional Laplacian (see e.g.
\cite{abatangelo-jarohs-saldana-2018}). Fortunately, since in the following we again restrict our attention to the case $\Omega= \Omega^* = B_1(0)$,  weak solutions to (\ref{fractional-Dirichlet-Poisson}) and (\ref{fractional-Dirichlet-Poisson-symmetrized}) are still unique and represented by (\ref{eq:green-function-formula}) with the explicit positive Green function given again by (\ref{green-func}), and this representation implies a positivity preserving property, see e.g. \cite{abatangelo2018green}. More precisely, the following facts are proved in \cite{abatangelo2018green}.

\begin{lemma}\label{Hopf-etc-s-greater-1}
  Let $\Omega= B_1(0)$ be the unit ball, let $f \in L^\infty(\Omega)$ nonnegative, and let $s>0$. Then the problem (\ref{fractional-Dirichlet-Poisson}) has a unique weak solution $u= u_f \in \mathscr{H}^s_0(\Omega)$ which is given by (\ref{green-func}) with the Green function $G_s$ defined by (\ref{eq:green-function-formula}). Moreover, we have $\frac{u}{\delta^s} \in C(\overline \Omega)$ and 
\[
\inf_{\overline \Omega} \frac{u}{\delta^s} >0 \qquad \text{if} \qquad u \not \equiv 0 \quad \text{in $\Omega$,}
\]
and on $\partial \Omega$ we have the representation
$$
\frac{u}{\delta^s}(\vartheta) = \int_{\Omega}M_s(y,\vartheta)f(y)dy 
$$
with the fractional Martin kernel $M_s$ given by (\ref{explicit-martin-ball}).
\end{lemma}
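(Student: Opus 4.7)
The plan is to read off all four assertions from the explicit Green function formula \eqref{green-func}, which, crucially, remains positive and integrable in the ball for every $s>0$ regardless of whether higher-order maximum principles are available.

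First I would establish uniqueness: if $u_1,u_2\in\mathscr{H}^s_0(\Omega)$ both satisfy \eqref{eq:defining-weak-sol}, then $w:=u_1-u_2$ lies in $\mathscr{H}^s_0(\Omega)$ and testing against itself gives $\int_{\R^N}|\xi|^{2s}|\widehat w(\xi)|^2 d\xi=0$, hence $w\equiv 0$. For existence, I would define
$$
u(x):=\int_\Omega G_s(x,y)f(y)\,dy,
$$
verify via Fubini (using the Green identity $\int_{\R^N}|\xi|^{2s}\widehat{G_s(\cdot,y)}\widehat{w}\,d\xi=w(y)$ for $w\in\mathscr{H}^s_0(\Omega)$, as recorded in \cite{abatangelo2018green}) that $u$ satisfies \eqref{eq:defining-weak-sol}, and note that positivity of the explicit kernel in \eqref{green-func} yields $u\geq 0$ whenever $f\geq 0$ and $u\not\equiv 0$ whenever $f\not\equiv 0$.

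Next I would analyze the boundary behavior of $u/\delta^s$. The explicit form \eqref{green-func} shows that for $x$ near $\partial\Omega$ and $y$ away from $x$ one has $G_s(x,y)=O(\delta^s(x))$ with a continuous quotient, while the singular regime $y\to x$ is controlled by the factor $|x-y|^{2s-N}$ times the integral $\int_0^{r_0}t^{s-1}(t+1)^{-N/2}dt$; a dominated-convergence argument then shows $G_s(\cdot,y)/\delta^s$ extends continuously up to $\partial\Omega$ with limit $M_s(y,\vartheta)$ as defined in Section~\ref{sec:prel-notat}. Interchanging limit and integration (justified by an $L^1$-bound uniform in $x$, since $M_s(\cdot,\vartheta)$ is bounded on $\Omega$) yields the Martin-kernel representation
$$
\frac{u(\vartheta)}{\delta^s(\vartheta)}=\int_\Omega M_s(y,\vartheta)f(y)\,dy,\qquad \vartheta\in\partial\Omega,
$$
and simultaneously shows $u/\delta^s\in C(\overline\Omega)$.

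Finally, I would establish the Hopf-type bound. For $u\not\equiv 0$ the explicit positive formula \eqref{explicit-martin-ball} gives $M_s(y,\vartheta)>0$ for every $(y,\vartheta)\in\Omega\times\partial\Omega$, so the representation above yields $u/\delta^s>0$ on $\partial\Omega$; combined with continuity of $u/\delta^s$ on $\overline\Omega$ and strict positivity of $u$ on $\Omega$ (from positivity of $G_s$), a compactness argument gives $\inf_{\overline\Omega}u/\delta^s>0$. The main technical obstacle is justifying the continuous extension and the limit-interchange in the regime $s>1$, where no strong maximum principle is at hand; this is handled purely by the explicit integral expressions in \eqref{green-func}--\eqref{explicit-martin-ball} and the pointwise bounds for $G_s/\delta^s$ derived from them in \cite{abatangelo2018green}.
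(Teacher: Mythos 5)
The paper gives no proof of this lemma --- it simply cites \cite{abatangelo2018green} for all of the stated facts. Your proposal reconstructs the argument along the same route: read uniqueness, the Green representation, continuity of $u/\delta^s$ up to the boundary, and the Hopf-type lower bound directly off the explicit formulas (\ref{green-func}) and (\ref{explicit-martin-ball}). The uniqueness argument, the existence and representation step, and the compactness argument for $\inf_{\overline \Omega} u/\delta^s>0$ (positivity of $G_s$ in the interior, strict positivity of the Martin integral on $\partial\Omega$, continuity on $\overline\Omega$) are all sound and in the spirit of the cited reference.

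One genuine slip in your justification of the limit interchange: you assert that $M_s(\cdot,\vartheta)$ is \emph{bounded} on $\Omega$. It is not, in general. From (\ref{explicit-martin-ball}) one has $M_s(y,\vartheta)=\frac{2\kappa_{N,s}}{s}\,(1-|y|^2)^s\,|\vartheta-y|^{-N}$, and along the normal segment $y=(1-\eps)\vartheta$ we get $1-|y|^2\sim 2\eps$ while $|\vartheta-y|=\eps$, so $M_s((1-\eps)\vartheta,\vartheta)\sim C\,\eps^{s-N}\to\infty$ whenever $s<N$. What is true, and what you actually need, is that $y\mapsto M_s(y,\vartheta)$ lies in $L^1(\Omega)$ (the singularity of order $|\vartheta-y|^{s-N}$ in the normal cone is integrable for every $s>0$), together with a pointwise domination of the family $y\mapsto G_s(x,y)/\delta^s(x)$ by an $x$-independent $L^1$ function, which can be extracted from the sharp two-sided bounds for $G_s$ in \cite{abatangelo2018green}. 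With that correction the dominated-convergence step closes and the rest of your argument stands.
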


Remarkably, as shown in the following theorem, the inequalities in Theorem~\ref{first-main-theorem} are actually reversed in the case $s>1$.

\begin{satz}
  \label{first-main-theorem-s-greater-1}
  Let $s>1$ and $\Omega=B_1(0)\subset\mathbb{R}^N$. For every radial and nonnegative function $f\in L^\infty(\Omega)$, we have
    \begin{equation}
\label{eq-first-main-theorem-s-greater-1}
      \frac{(u_f)^*}{\delta^s}\leq\frac{u_{f^*}}{\delta^s} \qquad \text{on} \ \partial\Omega,
    \end{equation}
    and equality holds if and only if $f=f^*$ a.e. in $\Omega$.
  \end{satz}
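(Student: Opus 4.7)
The plan is to reduce the pointwise inequality (\ref{eq-first-main-theorem-s-greater-1}) to a one-line weighted rearrangement inequality, by exploiting the fact that for radial $f$ all three quantities $u_f$, $(u_f)^*$ and $u_{f^*}$ are radial, so that their fractional normal derivatives on $\partial\Omega$ are \emph{constants}. The pointwise inequality then collapses to a comparison of two numbers.

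The first step is to check that the derivation of Lemma~\ref{sec:further-results-1-0} carries over verbatim to $s>1$. Lemma~\ref{Hopf-etc-s-greater-1} supplies exactly the same Martin-kernel boundary representation (\ref{eq:martin-kernel-boundary-formula}) we used for $s\in(0,1)$; the explicit form (\ref{explicit-martin-ball}) of $M_s$ and the elementary spherical identity (\ref{eq:formula-spherical-integral-martin-kernel}) are valid for every $s>0$. Hence
\[
\frac{u_f}{\delta^s}\equiv \frac{2\kappa_{N,s}}{s}\int_\Omega f(y)(1-|y|^2)^{s-1}\,dy\qquad\text{on }\partial\Omega,
\]
and analogously for $u_{f^*}$. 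Moreover, since $u_f$ is radial with $\frac{u_f}{\delta^s}\in C(\overline\Omega)$ and strictly positive infimum (again by Lemma~\ref{Hopf-etc-s-greater-1}), Corollary~\ref{trunc-symmetry} (whose statement and proof are already formulated for general $s>0$) applies and gives $\frac{(u_f)^*}{\delta^s}=\frac{u_f}{\delta^s}$ on $\partial\Omega$.

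Combining these facts, (\ref{eq-first-main-theorem-s-greater-1}) is equivalent to
\[
\int_\Omega f(y)(1-|y|^2)^{s-1}\,dy \;\le\; \int_\Omega f^*(y)(1-|y|^2)^{s-1}\,dy,
\]
with equality if and only if $f=f^*$ a.e. This is where the key dichotomy with Theorem~\ref{first-main-theorem}(i) appears: because $s>1$, the radial weight $r\mapsto(1-r^2)^{s-1}$ is now strictly \emph{decreasing} on $[0,1)$, hence it is already Schwarz-symmetric. The inequality is therefore the Hardy--Littlewood rearrangement inequality applied with $g(y)=(1-|y|^2)^{s-1}=g^*(y)$. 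For the equality characterization one repeats the layer-cake computation of Section~\ref{sec:reverse-bound-talent} with this weight in place of $\mathbf{k}_{N,s}$; since the monotonicity of the weight is reversed, every sign in that computation flips, and the strict decrease of $(1-r^2)^{s-1}$ forces $|\Omega_t\setminus\Omega_t^*|=0$ for a.e.\ $t>0$ whenever equality holds, i.e.\ $f=f^*$ a.e.

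I do not expect a genuine obstacle: once one has the Martin-kernel machinery in the higher-order regime (precisely the content of Lemma~\ref{Hopf-etc-s-greater-1}), the proof is a sign-flipped copy of the argument for Theorem~\ref{first-main-theorem}(i), driven entirely by the change of monotonicity of $(1-r^2)^{s-1}$ as $s$ crosses $1$.
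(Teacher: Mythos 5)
Your proof is correct and follows essentially the same route as the paper: reduce to the boundary-constant formula of Lemma~\ref{sec:further-results-1-0} (valid for all $s>0$ thanks to Lemma~\ref{Hopf-etc-s-greater-1}), apply Corollary~\ref{trunc-symmetry}, and observe that the weight $r\mapsto(1-r^2)^{s-1}$ changes from strictly increasing to strictly decreasing as $s$ crosses $1$, which reverses every inequality in the layer-cake computation of Section~\ref{sec:reverse-bound-talent}. Your invocation of Hardy--Littlewood for the inequality part is a harmless shortcut (the paper runs the layer-cake argument for both the inequality and the equality characterization), and you correctly fall back to the layer-cake computation to extract the rigidity statement $f=f^*$ a.e.
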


  As a consequence, for every radial nonnegative function $f \in L^\infty(\Omega)$ with $f \not  \equiv f^*$, the pointwise strict Talenti inequality $(u_f)^* < u_{f^*}$ holds at least in a neighborhood of $\partial \Omega$.

\begin{proof}
As in the proof of Theorem~\ref{first-main-theorem} we have 
\begin{align*}
    \int_{\partial \Omega}\Bigl(\frac{u_f}{\delta^s}-\frac{u_{f^*}}{\delta^s}\Bigr)d \sigma \nonumber &= \int_{\Omega}\mathbf{k}_{N,s}(|y|) (f(y)-f^*(y))dy\\
                                                                                                      &=\int_0^\infty\int_{B_1(0)} \mathbf{k}_{N,s}(|y|)(1_{\{f>t\}}(y)-1_{\{f^*>t\}}(y))dydt
\end{align*}
with the function
\begin{equation}
\label{k-N-s-greater-1}  
\mathbf{k}_{N,s}(r) = \frac{2\kappa_{N,s}\, \omega_{N-1} }{s}(1-r^2)^{s-1} \qquad \text{for all $r \in [0,1]$,}
\end{equation}
which is now {\em strictly decreasing} and bounded with $0 \le \mathbf{k}_{N,s}(r) \le \mathbf{k}_{N,s}(0)= \frac{2\kappa_{N,s}\omega_{N-1}}{s}$ for $r \in [0,1]$. We can therefore follow the remainder of the proof of Theorem~\ref{first-main-theorem} with reversed inequalities to obtain the result.
\end{proof}

Next, we wish to consider nonradial source functions $f \in L^\infty(B)$ in the following, aiming at a generalization of Theorem~\ref{second-main-theorem} to the case $s>1$. Since Lemma~\ref{sec:further-results-1} and its proof fails in the case $s>1$, the proof of Theorem~\ref{second-main-theorem} does not fully extend to the case $s>1$. However, we still have the following variant with a stronger assumption on $\rho$.

\begin{satz}
  \label{higher-order-nonradial}
Let $s \in (1,N]$ and $\Omega=B_1(0)\subset\mathbb{R}^N$, let $\xi\in\Omega\setminus\{0\}$, and let $0<\rho<\min\{|\xi|, 1-|\xi|\}$ satisfy
    \begin{equation}\label{ineq-N>s>1-rho-xi}
        (1-(|\xi|-\rho)^2)^s\leq \Big(1-\frac{\rho}{1-|x|}\Big)^N(1-\rho^2)^{s-1}.
    \end{equation}
Then for every nonnegative function $f\in L^\infty(\Omega)$ supported in $B_\rho(\xi)$, the boundary Talenti inequality (\ref{eq-first-main-theorem-s-greater-1}) holds, and the inequality is strict if $f\not \equiv 0$.
\end{satz}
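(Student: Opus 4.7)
The plan is to follow the template of Theorem~\ref{second-main-theorem}: derive an upper bound for $\frac{(u_f)^*}{\delta^s}$ and a lower bound for $\frac{u_{f^*}}{\delta^s}$ on $\partial\Omega$, and then check that the hypothesis~(\ref{ineq-N>s>1-rho-xi}) is exactly the algebraic condition that allows these two estimates to be chained together. The main obstacle is that Lemma~\ref{sec:further-results-1} breaks down for $s>1$, since now $(1-|y|^2)^{s-1}<1$ for $y\in\Omega\setminus\{0\}$; hence the crude lower bound $\frac{2\kappa_{N,s}}{s}\|f^*\|_{L^1}$ used in Theorem~\ref{second-main-theorem} is no longer available and must be replaced by a weighted bound that exploits the compact support of $f^*$.

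For the upper bound I would simply reuse Lemma~\ref{support-computation}. Its proof rests on Proposition~\ref{sym-explizit-boundary-expr}, which only requires that $\frac{u}{\delta^s}\in C(\overline{\Omega\setminus B_r(0)})$ with $\essinf\frac{u}{\delta^s}>0$, together with the explicit Martin kernel formula~(\ref{explicit-martin-ball}); all three ingredients remain valid for $s>1$ in the ball $\Omega=B_1(0)$ by Lemma~\ref{Hopf-etc-s-greater-1}. This yields the same estimate $\frac{(u_f)^*}{\delta^s}\le \frac{2\kappa_{N,s}}{s}\frac{(1-(|\xi|-\rho)^2)^s}{(1-\rho/(1-|\xi|))^N}\|f\|_{L^1(\Omega)}$ on $\partial\Omega$ as in the case $s<1$.

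For the lower bound I first observe that since $\mathrm{supp}\,f\subset B_\rho(\xi)$, every superlevel set $\{f>t\}$ has measure at most $|B_\rho|$, so $f^*$ is supported in $\overline{B_\rho(0)}$. Lemma~\ref{sec:further-results-1-0} extends without change to $s>1$ in the ball, because its proof only invokes the Martin boundary representation of $u_{f^*}$ on $\partial\Omega$ (ensured here by Lemma~\ref{Hopf-etc-s-greater-1}) and the spherical integral~(\ref{eq:formula-spherical-integral-martin-kernel}). Since $s-1>0$, the weight $(1-|y|^2)^{s-1}$ is radially decreasing, so in particular $(1-|y|^2)^{s-1}\ge(1-\rho^2)^{s-1}$ on $\overline{B_\rho(0)}$. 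Substituting this into the resulting representation gives $\frac{u_{f^*}}{\delta^s}\ge \frac{2\kappa_{N,s}}{s}(1-\rho^2)^{s-1}\|f\|_{L^1(\Omega)}$ on $\partial\Omega$, with strict inequality whenever $f\not\equiv 0$, since $f^*$ cannot be concentrated on the measure-zero sphere $\{|y|=\rho\}$.

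Chaining the two bounds and reading off the algebraic condition for the upper one to fit under the lower one recovers exactly the hypothesis~(\ref{ineq-N>s>1-rho-xi}), and the strictness claim follows from the strictness in the lower estimate. The only subtle point is the verification that Lemma~\ref{support-computation}, Proposition~\ref{sym-explizit-boundary-expr}, and Lemma~\ref{sec:further-results-1-0} still function in the regime $s>1$; all three transfer because the relevant quantitative ingredients (Green function representation, explicit Martin kernel, positivity on $\partial\Omega$ via the fractional Hopf principle in the ball) are supplied in this regime by Lemma~\ref{Hopf-etc-s-greater-1}.
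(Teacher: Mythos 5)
Your proof is correct and follows essentially the same approach as the paper: reuse Lemma~\ref{support-computation} for the upper bound on $(u_f)^*/\delta^s$, and replace the crude lower bound from Lemma~\ref{sec:further-results-1} (which breaks down for $s>1$) with a weighted lower bound on $u_{f^*}/\delta^s$ that exploits $\mathrm{supp}\,f^*\subset\overline{B_\rho(0)}$ and the monotonicity of $(1-|y|^2)^{s-1}$. The one small omission is that you never explain where the hypothesis $s\le N$ enters. The last step of the proof of Lemma~\ref{support-computation} discards the factor $\bigl(T_{N,N/s}(\xi)\bigr)^{-s}\le 1$, and this relies on Lemma~\ref{sec:bound-talenti-ineq-1} applied with $\tau=N/s$; that lemma requires $\tau\ge 1$, i.e. $s\le N$. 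Your listed ``three ingredients'' (continuity of $u/\delta^s$ up to the boundary, the Hopf-type positivity, and the explicit Martin kernel formula) do not cover this point, so your claim that the upper bound ``transfers without change'' glosses over the only place where the range of $s$ genuinely matters.
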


\begin{proof}
We first note that the proof of Lemma~\ref{support-computation} extends to the case $s \in (0,N]$ since Lemma~\ref{sec:bound-talenti-ineq-1} still applies with $\tau = \frac{N}{s} \ge 1$. This yields the estimate 
\begin{equation}
  \label{higher-order-nonradial-1-ineq}
    \frac{(u_f)^*}{\delta^s} \leq \frac{2\kappa_{N,s}}{s}\frac{(1-(|\xi|-\rho)^2)^s}{(1-\frac{\rho}{1-|\xi|})^N}\lVert f\rVert_{L^1(\Omega)}.
\end{equation}
Moreover, since Lemma~\ref{sec:further-results-1-0} and its proof is valid for arbitrary $s>0$, we can write the unique value of $\frac{u_f}{\delta^s}$ on $\partial \Omega$ as 
  \begin{equation*}
    \frac{u_{f^*}}{\delta^s} = \frac{2 \kappa_{N,s}}{s} \int_{\Omega} f^*(y)(1-|y|^2)^{s-1}dy.
  \end{equation*}
Since $f^*$ is supported in $B_\rho(0)$ and $s \ge 1$, it then follows that 
\begin{equation}
  \label{higher-order-nonradial-2-ineq}
    \frac{u_{f^*}}{\delta^s}  \geq \frac{2\kappa_{N,s}}{s}(1-\rho^2)^{s-1}\lVert f \rVert_{L^1(\Omega)},
  \end{equation}
and this inequality is strict if $f \not \equiv 0$. The claim now follows by combining \ref{ineq-N>s>1-rho-xi}, (\ref{higher-order-nonradial-1-ineq}) and (\ref{higher-order-nonradial-2-ineq}). 
\end{proof}

\begin{rem}{\rm 
  Theorems~\ref{first-main-theorem-s-greater-1} and \ref{higher-order-nonradial} give rise to the following open question: Does the boundary Talenti inequality (\ref{eq-first-main-theorem-s-greater-1}) hold in the case of {\em arbitrary nonradial} nonnegative functions $f\in L^\infty(\Omega)$ if $s \in (1,N]$?
Note also that we have no result for nonradial functions $f$ in the case $s > N$. }
\end{rem}

Finally, we wish to state an extension of the Martin kernel inequality given in Proposition~\ref{sym-explizit-boundary-expr-green} to the case $s \ge 1$.

\begin{prop}
\label{sym-explizit-boundary-expr-green-s-greater-1}
Let $s >0$ and $\xi \in \Omega$. Then the Schwarz symmetrization of the function $G_s(\cdot,\xi)$ satisfies 
\begin{align*}
M_s(\xi,\cdot )^* \equiv \Bigg(\frac{1}{\omega_{N-1}}\int_{S^{N-1}}\Big(M_s(\xi,\vartheta)\Big)^{-\frac{1}{s}}d\sigma(\vartheta)\Bigg)^{-s} = \frac{2\kappa_{N,s}}{s}(1-|\xi|^2)^{s}\Bigl(T_{N,\frac{N}{s}}(\xi) \Bigr)^{-s}    \qquad \text{on $\partial \Omega$}   
\end{align*}
with the function $T_{N,\frac{N}{s}}$ defined in (\ref{eq:def-T-tau}). Moreover, if $s \in (0,N]$, we have
\begin{align}
  \label{sym-explizit-boundary-expr-green-formula-ineq-s-greater-one}
  M_s(\xi,\cdot)^* <  \frac{2\kappa_{N,s}}{s}(1-|\xi|^2)^{s} = M_s(0,\cdot) \qquad \text{for $\xi \in \Omega \setminus \{0\}$.}
\end{align}
\end{prop}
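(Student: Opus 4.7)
The plan is to follow the proof of Proposition~\ref{sym-explizit-boundary-expr-green} essentially verbatim, since all the ingredients remain valid for general $s > 0$. In particular, the explicit Martin kernel formula (\ref{explicit-martin-ball}) is stated in the excerpt for $s > 0$ and $N \ge 1$, and the key Proposition~\ref{sym-explizit-boundary-expr} is likewise formulated for arbitrary $s > 0$, not just $s \in (0,1)$. So no new tool is needed; only the applicability of Proposition~\ref{sym-explizit-boundary-expr} to $u = G_s(\cdot,\xi)$ must be reconfirmed in the broader range.

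First I would fix $\xi \in \Omega$, pick any $r \in (|\xi|,1)$ so that $\xi \notin \overline{\Omega \setminus B_r(0)}$, and verify the hypotheses of Proposition~\ref{sym-explizit-boundary-expr} for $u = G_s(\cdot,\xi)$. On the annular region $\Omega \setminus B_r(0)$, the explicit formula (\ref{green-func}) yields continuity of $G_s(\cdot,\xi)$ up to the inner sphere, and the continuous extension of $\frac{G_s(\cdot,\xi)}{\delta^s}$ to $\partial \Omega$ is precisely $\vartheta \mapsto M_s(\xi,\vartheta)$, which is strictly positive by (\ref{explicit-martin-ball}). Combined with the interior singularity of $G_s(\cdot,\xi)$ at $\xi$ and its continuous positivity on $\overline{\Omega}\setminus\{\xi\}$, this yields both the regularity hypothesis and $\essinf_{\overline{\Omega}} \frac{G_s(\cdot,\xi)}{\delta^s} > 0$. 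With the hypotheses in place, substituting (\ref{explicit-martin-ball}) into the conclusion of Proposition~\ref{sym-explizit-boundary-expr} factors out $\frac{2\kappa_{N,s}}{s}(1-|\xi|^2)^{s}$ and leaves the integral $\frac{1}{\omega_{N-1}}\int_{S^{N-1}}|\vartheta-\xi|^{N/s}d\sigma(\vartheta) = T_{N,\frac{N}{s}}(\xi)$, producing the asserted boundary identity for $M_s(\xi,\cdot)^*$.

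For the strict inequality (\ref{sym-explizit-boundary-expr-green-formula-ineq-s-greater-one}), the restriction $s \in (0,N]$ translates exactly to $\tau := \frac{N}{s} \ge 1$, which is the hypothesis of Lemma~\ref{sec:bound-talenti-ineq-1}. That lemma yields $T_{N,\frac{N}{s}}(\xi) > 1$ for $\xi \in \Omega \setminus \{0\}$, and the evaluation $M_s(0,\vartheta) \equiv \frac{2\kappa_{N,s}}{s}$ (from (\ref{explicit-martin-ball}) with $|\xi|=0$ and $|\vartheta|=1$) then immediately gives $M_s(\xi,\cdot)^* < M_s(0,\cdot)$ on $\partial \Omega$.

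I expect the only real obstacle to be the justification that Proposition~\ref{sym-explizit-boundary-expr} applies to the Green function in the higher-order range $s \ge 1$, where general maximum principles are delicate. However, this difficulty is bypassed entirely by relying on the explicit Green and Martin kernel formulas available in the ball: the boundary positivity is supplied directly by (\ref{explicit-martin-ball}), and the singularity at $\xi$ only reinforces, rather than threatens, the essential-infimum bound. The natural boundary $s = N$ for the strict inequality is genuine, since for $s > N$ the exponent $\tau = N/s < 1$ falls outside the convexity regime of Lemma~\ref{sec:bound-talenti-ineq-1}.
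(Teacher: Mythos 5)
Your proposal is correct and follows essentially the same route as the paper: the paper's own proof simply states that the argument is identical to that of Proposition~\ref{sym-explizit-boundary-expr-green} (apply Proposition~\ref{sym-explizit-boundary-expr} to $u = G_s(\cdot,\xi)$, substitute the explicit Martin kernel (\ref{explicit-martin-ball}), and invoke Lemma~\ref{sec:bound-talenti-ineq-1} with $\tau = N/s \ge 1$). You additionally spell out the verification of the regularity and $\essinf$ hypotheses of Proposition~\ref{sym-explizit-boundary-expr} for the Green function in the range $s\ge 1$, which the paper leaves implicit; this is a useful clarification but not a different method.
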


\begin{proof}
  The proof is exactly the same as the proof of Proposition~\ref{sym-explizit-boundary-expr-green-s-greater-1}. For the inequality~(\ref{sym-explizit-boundary-expr-green-formula-ineq-s-greater-one}), we only have to note that $\frac{N}{s}\ge 1$ by assumption and therefore Lemma~\ref{sec:bound-talenti-ineq-1} still applies with $\tau = \frac{N}{s}$.   
\end{proof}

\noindent Institut für Mathematik \\
Goethe-Universität Frankfurt \\
Robert-Mayer-Str. 10, D-60629 Frankfurt am Main, Germany \\
\texttt{Email address: y.elkarrouchi@mathematik.uni-frankfurt.de}\\ \\

\noindent Institut für Mathematik \\
Goethe-Universität Frankfurt \\
Robert-Mayer-Str. 10, D-60629 Frankfurt am Main, Germany \\
\texttt{Email address: weth@math.uni-frankfurt.de}

\end{document}